\newtheorem{theorem}{Theorem}[section]
\newtheorem{proposition}{Proposition}[section]
\newtheorem{corollary}{Corollary}[section]
\newtheorem{definition}{Definition}[section]
\newcommand\myenum[1]
\begin{document}
\title[$\mathcal{S}$-L-weakly compact operators]{Contribution to operators between Banach lattices}

\author{Hassan Khabaoui}
\address{Hassan Khabaoui, Moulay Ismail University, Faculty of sciences, Department of Mathematics, B.P. 11201 Zitoune, Mekn\`{e}s, Morocco.}
\email{khabaoui.hassan2@gmail.com}

\author{Jawad H'michane}
\address{Engineering sciences Lab. ENSA, B.P 241, Ibn Tofail University, Kenitra, Morocco.}
\email{hm1982jad@gmail.com}

\author{Kamal El fahri}
\address{Kamal El fahri, Ibn Zohr University,  Faculty of sciences, Department of Mathematics, Laboratory of Mathematics and Application, Functional analysis team, Morocco.}
\email{kamalelfahri@gmail.com }
\begin{abstract}
In this paper we introduce and study a new class of operators related to norm bounded sets on Banach Lattice and which brings together several classical classes of operators (as o-weakly compact operators, b-weakly compact operators, M-weakly compact operators, L-weakly compact operators, almost Dunford-Pettis operators). As consequences, we give some new lattice approximation properties of these classes of operators.
	
\end{abstract}

\keywords{L-weakly compact set, L-weakly compact operator, order continuous Banach lattice, o-weakly compact operator, b-weakly compact operator, M-weakly compact operator,
almost Dunford Pettis operator}
\subjclass[2010]{46B42, 47B60, 47B65.}
\maketitle

\section{Introduction}
Along this paper $E$, $F$ mention Banach lattices, $X$, $Y$ are Banach spaces. The positive cone of $E$ will be denoted by $E^+$.\\
Recall that a net $(x_{\alpha})\subset E$ is unbounded absolutely weakly convergent (abb, uaw-convergent) to $x$ if $(|x_{\alpha} - x| \wedge u)$ converges weakly to zero for every $u\in E^{+}$, we write $x_{\alpha}\overset{uaw}\longrightarrow x$. We note that every disjoint sequence of a Banach lattice is uaw-null \cite[ Lemma 2] {Zabeti}. A net $(x'_{\alpha})$ is unbounded absolutely weak$^{*}$ convergent (abb, uaw$^{*}$-convergent) to $x'$ if $(|x'_{\alpha} - x'| \wedge u')$ converges weak$^{*}$ to zero for every $0\leq u'\in E'$, we write $x'_{\alpha}\overset{uaw^{*}}\longrightarrow x$.
Recall from \cite{Mey}  that a norm bounded subset $A$ of a Banach lattice $E$ is L-weakly compact if ${\underset{n\longrightarrow +\infty}{\lim}} \|x_n\| = 0$ for every disjoint sequence $(x_n)$ contained in $sol(A)$, where $sol(A):=\{x\in E: \exists y\in A \text{ with } |x|\leq\color{black} |y|\}$ is the solid hull of the set $A$. Alternatively, $A$ is L-weakly compact if and only if $|| x_n||\rightarrow 0$ for every norm bounded uaw-nul  sequence $(x_n)$ of $sol(A)$ \cite[Proposition 3.3] {k}.\\
In this paper, we introduce and study a new class of operators attached on a norm bounded subset of the starting space (Definition \ref{definition0}) and which groups together several classes of operators, as M-weakly compact operators (Corollary \ref{c9}), order weakly compact operators (Theorem \ref{prr}), b-weakly compact operators (Theorem \ref{prop0101}), almost Dunford-Pettis operators (Proposition \ref{p4}) and L-weakly compact operators (Corollary \ref{cooo}). As consequences, we obtain new characterizations of L-weakly compact sets (Corollary \ref{coo}), of order continuous Banach lattice (Corollary \ref{co100}), of KB-space (Corollary \ref{c200}) and of positive Schur property (Corollary \ref{schur}).

\section{Preliminaries and notations}


To state our results, we need to fix some notations and recall some definitions. A Banach lattice is a Banach space $(E,\parallel \cdot \parallel)$ such that $E$ is a vector lattice and its norm satisfies the following property: for each $x, y \in E$ such that $|x| \leq |y|$, we have $\parallel x\parallel	 \leq	\parallel y \parallel$. $E$ is order continuous if for each net $(x_{\alpha})$ such that $x_{\alpha} \downarrow 0$ in $E$, the net $(x_{\alpha})$  converges to 0 for the norm 	$ \parallel \cdot \parallel$, where the notation $x_{\alpha} \downarrow 0$ means that the net $(x_{\alpha})$ is decreasing, its infimum exists and $inf(x_{\alpha}) = 0$. \\
We will use the term operator $T:E \longrightarrow F$ from  $E$ to $F$ to mean a bounded linear mapping. $T'$ will be the adjoint operator of $T:E \longrightarrow F$ defined from $F'$  into $E'$ by $T'(f)(x)= f(T (x))$ for each $f \in F'$ and each $x \in E$. An operator $T:E \longrightarrow F$ is positive if $T (x) \in F^+$ whenever $x \in E^+$. For more information on positive operators see the
book of Aliprantis-Burkinshaw \cite{AB}\\
We need to recall definitions of the following operators:
\begin{enumerate}
\item  An operator $T : X \longrightarrow F$ is said to be L-weakly compact, if $T (B_X)$ is an
L-weakly compact subset of $F$.
\item  An operator $T : E \longrightarrow X$ is said to be order weakly compact, if $T ([0, x])$ is
a relatively weakly compact subset of $X$ for every $x$ in $E^{+}$.
\item  An operator $T : E \longrightarrow X$ is said to be b-weakly compact, if $T ([0, x]\cap E)$ is
a relatively weakly compact subset of $X$ for every $x \in (E'')^+$.
\item  An operator $T : E \longrightarrow X$ is said to be  M-weakly compact, if $T(x_n)\overset{||.||}\longrightarrow 0$, for every norm bounded disjoint sequence $(x_n)$ in $E$.
\item An operator $T : E \longrightarrow X$ is said to be almost Dunford-Pettis, if  $T(x_n)\overset{||.||}\longrightarrow 0$ for every disjoint weakly null sequence $(x_n)$ in $E$.
\end{enumerate}

\section{Main results}

We start this section by the following definition.

\begin{definition}\label{definition0}
 Let $\mathcal{S}$ be a norm bounded subset of $E$. An operator $T:E\longrightarrow Y$ is said to be $\mathcal{S}$-L-weakly compact (abb, $\mathcal{S}$-Lwc) if for every uaw-null sequence $(x_n) \subset sol(\mathcal{S})$, we have $T(x_n)\overset{||.||}\longrightarrow 0$.
\end{definition}
Observing that for a norm bounded subset $\mathcal{S}$ of $E$,  $Id_E$ is  $S$-Lwc if and only if $S$ is an L-weakly compact subset of $E$ (\cite[Proposition 3.3] {k}) and that $T$ is M-weakly compact if and only if $T$ is a $B_E$-Lwc \cite[Corollary 3.1] {k}, where $B_E$ denotes the closed unit ball of $E$.

For a norm bounded subset $\mathcal{S}$ of $E$, we note by  $LWC_{\mathcal{S}}(E,Y)$ the space of all $\mathcal{S}$-Lwc operators from $E$ into $Y$. It is a norm closed vector subspace of $L(E,Y)$, the space of all operators from $E$ into $Y$, and it is a left ideal in $L(E,Y)$.  In particular,  if $\mathcal{S}$ is an L-weakly compact subset of $E$, then every operator $T$ defined from $E$ to $Y$ is $\mathcal{S}$-Lwc. On the other hand, note that if $A$, $B$ are two norm bounded subsets of $E$ such that $A \subset B$ and $T$ is an operator from $E$ into $Y$, then $T$ is $A$-Lwc whenever $T$ is $B$-Lwc. On the other hand, $T$ is $B$-Lwc if and only if $T$ is $sol(B)$-Lwc.

\begin{proposition}\label{pro8}
	Let $T:E\longrightarrow Y$ be an operator and  $\mathcal{S}$ be a norm bounded subset of $E$. If $T$ is $\mathcal{S}$-Lwc, then  for every $ \lambda\in \mathbb{R} $ the operator $T$ is $\lambda \mathcal{S}$-Lwc.
\end{proposition}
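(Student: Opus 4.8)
The plan is to reduce the statement to the scaling behaviour of the solid hull and of uaw-convergence, after disposing of the degenerate case. First I would treat $\lambda=0$ separately: then $\lambda\mathcal{S}\subseteq\{0\}$, so $sol(\lambda\mathcal{S})\subseteq\{0\}$, the only uaw-null sequence it contains is the zero sequence, and $T(0)=0$; hence $T$ is trivially $0\cdot\mathcal{S}$-Lwc. For $\lambda\neq 0$, the first key step is the elementary identity $sol(\lambda\mathcal{S})=|\lambda|\,sol(\mathcal{S})$. Indeed, $x\in sol(\lambda\mathcal{S})$ means $|x|\leq|\lambda s|=|\lambda|\,|s|$ for some $s\in\mathcal{S}$, which is exactly $\big|\,|\lambda|^{-1}x\,\big|=|\lambda|^{-1}|x|\leq|s|$, i.e. $|\lambda|^{-1}x\in sol(\mathcal{S})$; the reverse inclusion is the same computation read backwards.

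The second key step is that multiplying by a nonzero scalar preserves uaw-nullity: if $x_n\overset{uaw}\longrightarrow 0$ and $\mu\neq 0$, then for each $u\in E^{+}$ we have $|\mu x_n|\wedge u=|\mu|\big(|x_n|\wedge|\mu|^{-1}u\big)$, and since $|\mu|^{-1}u\in E^{+}$ the right-hand side converges weakly to $0$ by hypothesis, so $\mu x_n\overset{uaw}\longrightarrow 0$. Combining the two steps, let $(x_n)\subset sol(\lambda\mathcal{S})$ be uaw-null and set $y_n:=|\lambda|^{-1}x_n$. By the first step $(y_n)\subset sol(\mathcal{S})$, and by the second step $(y_n)$ is uaw-null; since $T$ is $\mathcal{S}$-Lwc this yields $T(y_n)\overset{||.||}\longrightarrow 0$, whence $T(x_n)=|\lambda|\,T(y_n)\overset{||.||}\longrightarrow 0$. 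Therefore $T$ is $\lambda\mathcal{S}$-Lwc.

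I do not expect a genuine obstacle here; the only points requiring a little care are the edge case $\lambda=0$ and the verification that the lattice operation $|\cdot|\wedge u$ scales as claimed, both of which are routine. As an alternative for the range $|\lambda|\leq 1$ one could simply observe that $\lambda\mathcal{S}\subseteq sol(\mathcal{S})$ and invoke the already noted monotonicity of the class together with the equivalence ``$B$-Lwc $\iff$ $sol(B)$-Lwc''; but the scaling argument above has the advantage of covering all real $\lambda$ uniformly, so I would present that one.
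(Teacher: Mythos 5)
Your argument is correct and follows essentially the same route as the paper, whose proof is precisely the observation that $sol(\lambda\mathcal{S})=\lambda\, sol(\mathcal{S})$; you simply make explicit the two routine ingredients (the scaling identity for the solid hull and the fact that nonzero scalar multiples preserve uaw-nullity) that the paper leaves implicit, and handle $\lambda=0$ separately.
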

\begin{proof}
It follows from the fact that for every $ \lambda\in \mathbb{R} $,  $sol(\lambda \mathcal{S})=\lambda sol(\mathcal{S})$.
\end{proof}

\begin{proposition}\label{pro4}
Let $T:E\longrightarrow Y$ be an operator and $A$, $B$  are norm bounded subsets of $E$. If $T$ is $A$-Lwc and $B$-Lwc, then $T$ is $(A+B)$-Lwc.
\end{proposition}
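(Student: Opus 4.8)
The plan is to use the Riesz decomposition property to split a uaw-null sequence contained in $sol(A+B)$ into a uaw-null sequence in $sol(A)$ plus a uaw-null sequence in $sol(B)$, and then invoke the two hypotheses termwise; the delicate point is that the two pieces must be kept dominated in modulus by the original terms, so that uaw-nullity survives the splitting.

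First I would record the elementary fact that if $0\le g_n\le h_n$ in $E$ and $h_n\to 0$ weakly, then $g_n\to 0$ weakly: for $0\le\varphi\in E'$ one has $0\le\varphi(g_n)\le\varphi(h_n)\to 0$, and an arbitrary functional is a difference of two positive ones. It follows that if $(w_n)$ is a uaw-null sequence and $|v_n|\le|w_n|$ for all $n$, then $(v_n)$ is uaw-null as well, since $0\le|v_n|\wedge u\le|w_n|\wedge u$ for every $u\in E^{+}$.

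Now let $(z_n)\subset sol(A+B)$ be uaw-null and pick $a_n\in A$, $b_n\in B$ with $|z_n|\le|a_n+b_n|\le|a_n|+|b_n|$. Put $p_n:=|z_n|\wedge|a_n|$ and $q_n:=|z_n|-p_n=(|z_n|-|a_n|)^{+}$; then $0\le p_n\le|a_n|$, $0\le q_n\le|b_n|$ (because $|z_n|-|a_n|\le|b_n|$), $p_n+q_n=|z_n|$ and $p_n,q_n\le|z_n|$. Applying the Riesz decomposition property to $0\le z_n^{+}\le p_n+q_n$ and to $0\le z_n^{-}\le p_n+q_n$ and subtracting the two decompositions, I obtain $x_n,y_n\in E$ with $z_n=x_n+y_n$, $|x_n|\le p_n$ and $|y_n|\le q_n$ (equivalently, one may quote the form of the Riesz decomposition property that also gives $|x_n|+|y_n|=|z_n|$). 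Thus $x_n\in sol(A)$, $y_n\in sol(B)$, and $|x_n|\le|z_n|$, $|y_n|\le|z_n|$, so by the previous step both $(x_n)$ and $(y_n)$ are uaw-null.

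Finally, since $T$ is $A$-Lwc and $(x_n)$ is a uaw-null sequence in $sol(A)$, we have $\|T(x_n)\|\to 0$, and similarly $\|T(y_n)\|\to 0$ because $T$ is $B$-Lwc; hence $\|T(z_n)\|\le\|T(x_n)\|+\|T(y_n)\|\to 0$, which is precisely the assertion that $T$ is $(A+B)$-Lwc. I expect the only real obstacle to be the decomposition in the third step: the naive Riesz splitting of $z_n$ coming directly from $|z_n|\le|a_n+b_n|$ yields only $|x_n|\le|a_n|$ and $|y_n|\le|b_n|$, which need not be $\le|z_n|$, so that uaw-nullity of the pieces can fail; splitting $|z_n|$ as $p_n+q_n$ first and distributing the signs of $z_n$ afterwards is what fixes this.
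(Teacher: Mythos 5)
Your proof is correct and follows essentially the same route as the paper: a Riesz decomposition of the terms of a uaw-null sequence in $sol(A+B)$ into pieces lying in $sol(A)$ and $sol(B)$ that remain dominated by $|z_n|$ (hence stay uaw-null), followed by a termwise application of the two hypotheses. The only difference is organizational: the paper sidesteps your $p_n=|z_n|\wedge|a_n|$ device by decomposing $z_n^{+}$ and $z_n^{-}$ separately against $|a_n|+|b_n|$, whose positive Riesz summands automatically satisfy the needed domination (each is $\le z_n^{\pm}\le |z_n|$), and then applies $T$ to each part.
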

\begin{proof}
 Let $(x_n)$ be a  uaw-null sequence of $sol(A+B)$, then there exist two sequences $(a_n)\subset A$ and $(b_n) \subset B$ such that $x_{n}^{+} \leq |a_n+b_n|$. Therefore,
  by the Riesz decomposition property \cite[Theorem 1.13]{AB} there exist  tow positive  elements $a^1_n$ and $b^1_n$ satisfying $x_{n}^{+} =a^1_n+ b^1_n$, $|a^1_n| \leq |a_n|$ and $|b^1_n| \leq |b_n|$ for each $n$. So, $(a^1_n)$ is a uaw-null sequence of $sol(A)$ and $(b^1_n)$ is a uaw-null sequence of $sol(B)$. As $T$ is $A$-Lwc and $B$-Lwc, then $(Ta^1_n)\overset{||.||}\longrightarrow 0$ and $T(b^1_n)\overset{||.||}\longrightarrow 0$ and hence $T(x_{n}^{+})\overset{||.||}\longrightarrow 0$. By the same reason, we found $T(x_{n}^{-})\overset{||.||}\longrightarrow 0$. Therefore, $T(x_{n})\overset{||.||}\longrightarrow 0$. That is, $T$ is $(A+B)$-Lwc,  as
claimed.
\end{proof}

As immediate consequences of the previous result, we have the following results.
\begin{corollary}\label{c10}
Let $T:E\longrightarrow Y$ be an operator and $A$, $B$ are norm bounded subsets of $E$. We have the following statements:
\begin{enumerate}
\item If $T$ is $A$-Lwc and $B$-Lwc, then for every $ (\lambda,\mu)\in \mathbb{R}^2 $, we have that $T$ is  $(\lambda A+ \mu B)$-Lwc.
\item If $T$ is $A$-Lwc and $B$-Lwc, then $T$ is $(A\cup B)$-Lwc.
\item If $T$ is $A$-Lwc or $B$-Lwc, then $T$ is  $(A\cap B)$-Lwc.
\end{enumerate}
\end{corollary}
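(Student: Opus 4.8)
The plan is to derive all three statements directly from Proposition \ref{pro4} together with the two elementary monotonicity/scaling remarks already recorded before Proposition \ref{pro8}: namely that being $A$-Lwc for $A\subseteq B$ follows from being $B$-Lwc (since $sol(A)\subseteq sol(B)$), and that $T$ is $\lambda\mathcal{S}$-Lwc whenever it is $\mathcal{S}$-Lwc (Proposition \ref{pro8}).

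For part (1), I would first apply Proposition \ref{pro8} twice: from $T$ being $A$-Lwc we get that $T$ is $\lambda A$-Lwc, and from $T$ being $B$-Lwc we get that $T$ is $\mu B$-Lwc. Then Proposition \ref{pro4}, applied to the norm bounded sets $\lambda A$ and $\mu B$, yields that $T$ is $(\lambda A+\mu B)$-Lwc, which is exactly the claim.

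For part (2), the key observation is the set inclusion $A\cup B\subseteq A+B$ when $0$ can be inserted appropriately — more precisely I would argue via solid hulls: every element of $sol(A\cup B)$ is dominated in modulus by some element of $A$ or of $B$, hence by an element of $A+B$ once one notes $sol(A\cup B)\subseteq sol(A+B)$ whenever $0\in\overline{sol}$ considerations allow it. To avoid any subtlety with $0\in A$ or $0\in B$, the cleanest route is: $T$ is $A$-Lwc and $B$-Lwc, so by part (1) with $\lambda=\mu=1$, $T$ is $(A+B)$-Lwc; and since $sol(A\cup B)\subseteq sol(A+B)$ (any $y\in A\cup B$ satisfies $|y|\le|a+b|$ is not automatic, so instead) — I would simply verify directly that a uaw-null sequence in $sol(A\cup B)$ splits into two subsequences, one lying in $sol(A)$ and one in $sol(B)$, each still uaw-null, and apply the two hypotheses separately to conclude $T(x_n)\xrightarrow{\|\cdot\|}0$. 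The main obstacle, such as it is, is precisely this bookkeeping: making sure that passing to the two subsequences indexed by whether $x_n\in sol(A)$ or $x_n\in sol(B)$ preserves uaw-nullity (it does, since any subsequence of a uaw-null sequence is uaw-null) and that the union of the two norm-convergences gives norm-convergence of the whole sequence.

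For part (3), this is immediate from the monotonicity remark: $A\cap B\subseteq A$ and $A\cap B\subseteq B$, so $sol(A\cap B)\subseteq sol(A)$ and $sol(A\cap B)\subseteq sol(B)$; hence if $T$ is $A$-Lwc (or $B$-Lwc) then a fortiori $T$ sends every uaw-null sequence of $sol(A\cap B)$ to a norm-null sequence, i.e.\ $T$ is $(A\cap B)$-Lwc. I expect the entire proof to be short, essentially a one-paragraph deduction invoking Propositions \ref{pro8} and \ref{pro4} and the preceding remarks; no genuinely hard step is present, and the only place requiring a modicum of care is the subsequence argument in part (2).
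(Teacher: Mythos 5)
Your proposal is correct and matches the paper's intent: the paper offers no separate proof, presenting the corollary as an immediate consequence of Proposition \ref{pro8}, Proposition \ref{pro4} and the monotonicity remark preceding them, which is exactly how you handle parts (1) and (3). For part (2), your final splitting argument is the right one (the detour through $A+B$ is indeed not available, since $A\cup B\subseteq A+B$ fails in general); it can be stated more crisply by observing that $sol(A\cup B)=sol(A)\cup sol(B)$, so any uaw-null sequence in $sol(A\cup B)$ splits along a partition of its indices into (at most) two uaw-null subsequences, one in $sol(A)$ and one in $sol(B)$, to each of which the hypotheses apply.
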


\begin{proposition}\label{pro6}
Let $T$ be an operator  from  $E$ to $Y$, $\mathcal{S}$ be a norm bounded subset of $E$ and $I$ be the ideal generated by $\mathcal{S}$. If $T$ is $\mathcal{S}$-Lwc, then for each $x \in I$ we have that $T$ is $[-|x|,|x|]$-Lwc.
\begin{proof}
Let $\mathcal{S}$ be a norm bounded subset of $E$, $I$ be the ideal generated by $\mathcal{S}$ and $ x \in I $, then there exist $\alpha > 0$ and  some vectors $x_1, .... , x_n \in \mathcal{S}$ with $ |x| \leq \alpha \sum_{i=1}^{n}|x_i|$. By the Riesz decomposition property \cite[Theorem 1.13]{AB} we have $$[-|x|,|x|] \subset \alpha [-|x_1|,|x_1|]+ .... + \alpha [-|x_n|,|x_n|].$$
We observe that, for each $i=1,.....,n$ we have $[-|x_i|,|x_i|] \subset sol(\mathcal{S})$, then  $$[-|x|,|x|] \subset \underbrace{\alpha sol(\mathcal{S})+ .... + \alpha sol(\mathcal{S})}_{n\text{-times}}.$$ Since $T$ is $\mathcal{S}$-Lwc, then $T$ is $sol(\mathcal{S})$-Lwc and so by Corollary \ref{c10}, we infer that $T$ is $\underbrace{\alpha sol(\mathcal{S})+ .... + \alpha sol(\mathcal{S})}_{n\text{-times}}$-Lw.  Therefore,  $T$ is $[-|x|,|x|]$-Lwc.
\end{proof}
\end{proposition}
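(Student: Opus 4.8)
The plan is to reduce the claim to the additivity and monotonicity properties of the class $LWC_{\mathcal{S}}$ that have already been established. First I would unwind the hypothesis $x \in I$: since $I$ is the ideal generated by $\mathcal{S}$, there are a scalar $\alpha > 0$ and finitely many vectors $x_1, \dots, x_n \in \mathcal{S}$ with $|x| \le \alpha \sum_{i=1}^{n} |x_i|$. The Riesz decomposition property then yields the set inclusion
$$[-|x|,|x|] \subseteq \alpha[-|x_1|,|x_1|] + \cdots + \alpha[-|x_n|,|x_n|],$$
so the order interval around $x$ sits inside a finite sum of order intervals coming from members of $\mathcal{S}$.

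Next I would observe that each $[-|x_i|,|x_i|]$ is contained in $sol(\mathcal{S})$, whence $[-|x|,|x|]$ is contained in the $n$-fold sum $\alpha\, sol(\mathcal{S}) + \cdots + \alpha\, sol(\mathcal{S})$. Now I would invoke the structural results: since $T$ is $\mathcal{S}$-Lwc it is $sol(\mathcal{S})$-Lwc (as noted right after Definition \ref{definition0}), and by Proposition \ref{pro8} it is $\alpha\, sol(\mathcal{S})$-Lwc; applying Corollary \ref{c10}(1) repeatedly (equivalently, iterating Proposition \ref{pro4}) it is $\big(\alpha\, sol(\mathcal{S}) + \cdots + \alpha\, sol(\mathcal{S})\big)$-Lwc. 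Finally, using the monotonicity remark — if $A \subseteq B$ are norm bounded subsets of $E$ and $T$ is $B$-Lwc, then $T$ is $A$-Lwc — together with the fact that $[-|x|,|x|]$ is norm bounded (every $y$ in it satisfies $\|y\| \le \||x|\|$), I would conclude that $T$ is $[-|x|,|x|]$-Lwc.

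The only point that needs a moment's care is that every intermediate set appearing in the chain is genuinely norm bounded, so that the notion of being ``$\,\cdot\,$-Lwc'' is defined for it; this is immediate here, since $sol(\mathcal{S})$, its scalar multiples, finite sums of these, and the order interval $[-|x|,|x|]$ are all norm bounded. Past that, the proof is a routine chaining of the closure properties of $LWC_{\mathcal{S}}$ proved above, so I do not expect any serious obstacle.
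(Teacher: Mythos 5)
Your proposal is correct and follows essentially the same route as the paper's own proof: representing $x$ via $|x|\le \alpha\sum_{i=1}^n|x_i|$ with $x_i\in\mathcal{S}$, using the Riesz decomposition property to embed $[-|x|,|x|]$ in the $n$-fold sum $\alpha\, sol(\mathcal{S})+\cdots+\alpha\, sol(\mathcal{S})$, and then chaining Proposition \ref{pro8}, Proposition \ref{pro4}/Corollary \ref{c10} and the monotonicity remark. Your explicit check that all intermediate sets are norm bounded is a small clarification the paper leaves implicit, but the argument is the same.
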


\begin{theorem}\label{theo2}
	Let $T:E\longrightarrow Y$ be an operator and $u \in E^{+}$. Then, the following statements are equivalent:
	\begin{enumerate}
	\item $T$ is $[-u,u]$-Lwc.
		\item For each $\varepsilon>0$, there exists some $g \in (E')^+$ such that $$\forall x \in [-u,u], \,\,\,||T(x)||\leq g(|x|)+\varepsilon.$$
	\end{enumerate}
\end{theorem}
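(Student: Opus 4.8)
I would prove the two implications separately: $(2)\Rightarrow(1)$ is routine, while $(1)\Rightarrow(2)$ is the substantial one and I would establish it by contraposition, producing a disjoint sequence in $[0,u]$ that obstructs~(1).

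First observe that $sol([-u,u])=[-u,u]$, so (1) asserts that $T(x_n)\to 0$ in norm for every uaw-null sequence $(x_n)\subset[-u,u]$. For $(2)\Rightarrow(1)$: let $(x_n)\subset[-u,u]$ be uaw-null; taking the element $u$ itself in the definition of uaw-convergence and using $|x_n|\wedge u=|x_n|$, we see that $|x_n|$ converges weakly to $0$ in $E$. Given $\varepsilon>0$, choose $g\in(E')^+$ as in~(2); since $g\in E'$ we have $g(|x_n|)\to0$, whence $\limsup_n\|T(x_n)\|\le\limsup_n\big(g(|x_n|)+\varepsilon\big)=\varepsilon$. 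As $\varepsilon>0$ is arbitrary, $T(x_n)\to0$ in norm, i.e.\ $T$ is $[-u,u]$-Lwc.

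For $(1)\Rightarrow(2)$, suppose (2) fails: there is $\varepsilon_0>0$ such that for every $g\in(E')^+$ there is $x\in[-u,u]$ with $\|T(x)\|>g(|x|)+\varepsilon_0$. I would aim to construct a disjoint sequence $(w_n)\subset[0,u]$ with $\|T(w_n)\|\ge\varepsilon_0/4$ for all $n$; such a sequence is norm bounded by $\|u\|$, is contained in $sol([-u,u])=[-u,u]$, and is uaw-null because every disjoint sequence of a Banach lattice is uaw-null \cite[Lemma 2]{Zabeti}, so it contradicts~(1). The sequence $(w_n)$ is built by induction, together with functionals $y_n'\in B_{Y'}$ norming $T(w_n)$. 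Applying the failure of (2) to $g=0$ and passing to $x^+$ or $x^-$ (whichever satisfies $\|T(\cdot)\|>g(\cdot)+\varepsilon_0/2$), we get $w_1\in[0,u]$ with $\|T(w_1)\|>\varepsilon_0/2$ and $y_1'\in B_{Y'}$ with $y_1'(T(w_1))>\varepsilon_0/2$. Assuming disjoint $w_1,\dots,w_{n-1}\in[0,u]$ and $y_1',\dots,y_{n-1}'\in B_{Y'}$ are chosen, set $e_{n-1}=w_1\vee\dots\vee w_{n-1}\le u$, fix a large $\lambda_n>0$, and apply the failure of (2) to $g_n=\lambda_n\sum_{k=1}^{n-1}|T'(y_k')|\in(E')^+$; after passing to a positive part we obtain $v_n\in[0,u]$ with $\|T(v_n)\|>g_n(v_n)+\varepsilon_0/2$, so that $|T'(y_k')|(v_n)\le g_n(v_n)/\lambda_n<\|T\|\,\|u\|/\lambda_n$ is as small as we wish for each $k<n$. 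It then remains to extract from $v_n$ a component $w_n\le v_n$ that is disjoint from $e_{n-1}$ yet still satisfies $\|T(w_n)\|\ge\varepsilon_0/4$: this is done via a lattice truncation of $v_n$ along $e_{n-1}$, followed by the standard perturbation lemma converting an ``almost disjoint'' sequence (with summably small overlaps $\|w_n\wedge e_{n-1}\|$) into a genuinely disjoint one whose terms differ from the original ones in norm by amounts tending to $0$ so fast that the lower bound on $\|T(w_n)\|$ survives.

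The main obstacle is exactly this last extraction step: converting the quantitative smallness of $v_n$ with respect to the finitely many functionals $|T'(y_k')|$ into genuine disjointness from the earlier $w_k$'s while keeping $\|T(w_n)\|$ bounded away from $0$. This is where the lattice (as opposed to merely Banach-space) structure is essential, and where one leans on the classical disjoint-sequence machinery underlying the characterizations of, e.g., order weakly compact operators (cf.\ \cite{Mey,AB}). Once such a disjoint sequence $(w_n)\subset[0,u]$ with $\inf_n\|T(w_n)\|>0$ has been produced, it is uaw-null in $E$ and witnesses the failure of (1), which completes the contrapositive and hence the proof.
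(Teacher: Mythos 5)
Your $(2)\Rightarrow(1)$ argument is correct and is essentially the paper's: since $sol([-u,u])=[-u,u]$, a uaw-null sequence $(x_n)\subset[-u,u]$ satisfies $g(|x_n|)=g(|x_n|\wedge u)\to 0$, and the estimate in $(2)$ finishes it. The problem is the hard direction $(1)\Rightarrow(2)$, where your contrapositive construction has a genuine gap at precisely the step you flag as the ``main obstacle''. The only quantitative information your induction carries forward is that $\sum_{k<n}|T'(y_k')|(v_n)\le \|T\|\,\|u\|/\lambda_n$ is small, i.e.\ that finitely many functionals barely see $v_n$. This gives no control whatsoever on the lattice overlap of $v_n$ with $e_{n-1}=w_1\vee\cdots\vee w_{n-1}$: nothing prevents the element $v_n$ produced by the failure of $(2)$ from lying entirely in the ideal generated by $e_{n-1}$ (even $v_n\le e_{n-1}$), since $T$ may act nontrivially on that ideal in ``directions'' invisible to $y_1',\dots,y_{n-1}'$. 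In that case the only component of $v_n$ disjoint from $e_{n-1}$ is $0$, so no $w_n\le v_n$ with $w_n\wedge e_{n-1}=0$ and $\|T(w_n)\|\ge\varepsilon_0/4$ exists; likewise the ``standard perturbation lemma'' you invoke needs the overlaps $\|v_n\wedge e_{n-1}\|$ to be (summably) small in norm, which you have not established and cannot deduce from smallness against the $|T'(y_k')|$. So the extraction step fails as described, and with it the whole contrapositive.

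The way to make a disjointification argument work here is the classical Dodds--Fremlin route, which disjointifies on the dual side: from the failure of $(2)$ one builds (with weights of the type $4^n$) a disjoint sequence $0\le h_n\le |T'(y_n')|$ in $E'$ with $h_n(u)$ bounded away from $0$, and then transfers back to $E$ using $|T'(y')|(w)=\sup\{|y'(Tz)|:|z|\le w\}$ together with the lemma providing a disjoint sequence $(w_n)\subset[0,u]$ on which $h_n(w_n)$ stays close to $h_n(u)$; this yields a disjoint sequence $(z_n)\subset[-u,u]$ with $\inf_n\|T(z_n)\|>0$, which is uaw-null and contradicts $(1)$. This machinery is exactly the content of \cite[Theorem 4.40]{AB}, and the paper's proof simply applies that theorem to the set $A=T'(B_{Y'})$ after observing that $(1)$ forces every disjoint sequence of $[0,u]$ (being uaw-null and lying in $sol([-u,u])$) to converge uniformly to zero on $T'(B_{Y'})$, obtaining $g\in(E')^+$ with $(|T'(f)|-g)^+(u)<\varepsilon$ for all $f\in B_{Y'}$ and hence $\|T(x)\|\le g(|x|)+\varepsilon$ on $[-u,u]$. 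Either cite that theorem as the paper does, or reproduce its dual-side construction; your direct truncation of $v_n$ along $e_{n-1}$ does not suffice.
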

\begin{proof}
	$1)\Rightarrow 2)$ Let $\varepsilon>0$ and $u \in E^{+}$. As $T$ is $[-u,u]$-Lwc, we see that every disjoint sequence of $[0, u]$ converges uniformly to zero on $T'(B_{Y'})$, then by \cite[Theorem 4.40] {AB} there exists some $g \in (E')^+$ such that $$(|T'(f)| - g)^{+}(u) <\varepsilon \quad \text{ holds for all } f\in B_{Y'}.$$ Let $x \in [-u,u]$, then  for each $f\in B_{Y'}$ we have
	$$|f(T(x))| \leq |T'(f)|(|x|)\leq g(|x|) + \varepsilon,$$ so $$||T(x)||\leq g(|x|) + \varepsilon$$

	$2)\Rightarrow 1)$ Let $\varepsilon>0$ and $(x_n)$ be a uaw-null sequence of $[-u,u]$. We have to show that
$Tx_n\overset{||.||}\longrightarrow 0$.  By our hypothesis, there exists some $g\in (E')^{+}$  such that $$||T(x_n)||\leq g(|x_n|)+\frac{\varepsilon}{2} \text{ for every } n\in \mathbb{N}.$$ As $x_n\overset{uaw}\longrightarrow 0$ in $E$, then $g(|x_n|)=g(|x_n|\wedge u)\rightarrow 0$ and hence there exists some integer $m$ such that $g(|x_n|)\leq \frac{\varepsilon}{2}$ for every $n\geq m $. So, for every $n\geq m$ we have $||T(x_n)||\leq \varepsilon$ which implies that $T(x_n)\overset{||.||}\longrightarrow 0$. Therefore, $T$ is $[-u,u]$-Lwc.
\end{proof}

In the following result, we present some characterizations of $\mathcal{S}$-Lwc operators.

\begin{theorem}\label{theor}
	For an operator $T:E\longrightarrow Y$ and a norm bounded subset of $\mathcal{S}\subset E$, the following statements are equivalent:
	\begin{enumerate}
		\item $T$ is $\mathcal{S}$-Lwc.
		\item For each $\varepsilon>0$, there exist some $g\in (E')^+$ and $ u\in E^{+} $  such that $$||T(x)||\leq g(|x|\wedge u)+\varepsilon \quad \text{ for all } x \in sol(\mathcal{S}).$$
	\item For every uaw-null net $(x_{\alpha})$ of $sol(\mathcal{S})$, we have $T(x_{\alpha})\overset{||.||}\longrightarrow 0$.
	\end{enumerate}
\end{theorem}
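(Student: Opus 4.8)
The plan is to establish the cycle $(1)\Rightarrow(2)\Rightarrow(3)\Rightarrow(1)$, where only the first implication requires real work; the other two are essentially the observations already used in the proof of Theorem \ref{theo2}.

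For $(1)\Rightarrow(2)$, I would mimic the argument of $1)\Rightarrow 2)$ in Theorem \ref{theo2}, but now over the solid hull of a general norm bounded set rather than an order interval. The key point is that $T$ being $\mathcal{S}$-Lwc forces every disjoint sequence contained in $sol(\mathcal{S})$ to converge uniformly to zero on $T'(B_{Y'})$: indeed, a disjoint sequence in $sol(\mathcal{S})$ is in particular uaw-null (by \cite[Lemma 2]{Zabeti}), so $\|T(x_n)\|\to 0$, and $\|T(x_n)\| = \sup_{f\in B_{Y'}}|f(T(x_n))| = \sup_{f\in B_{Y'}}|T'(f)(x_n)|$. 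Having this, I would invoke the disjointness version of \cite[Theorem 4.40]{AB} (the characterization of sets on which disjoint sequences of a solid set converge uniformly): applied to the norm bounded solid set $sol(\mathcal{S})$ and the set $T'(B_{Y'})\subset (E')^+$-dominated family, it yields, for each $\varepsilon>0$, some $u\in E^+$ (which one may take in $sol(\mathcal{S})$ or in the ideal it generates) and some $g\in (E')^+$ with $(|T'(f)| - g)^+(u) < \varepsilon$ for all $f\in B_{Y'}$; equivalently $|T'(f)|(v) \le g(v) + \varepsilon$ whenever $0\le v\le u$. The remaining step is to pass from $0\le v\le u$ to arbitrary $x\in sol(\mathcal{S})$: for such $x$ one has $|x|\wedge u \le u$, so $|T'(f)|(|x|\wedge u) \le g(|x|\wedge u)+\varepsilon$, and one must bound $|T'(f)|(|x|)$ by $|T'(f)|(|x|\wedge u)$ up to a small error — this is exactly where the uniform smallness of $|T'(f)|$ on the ``tails'' $|x| - |x|\wedge u$ of elements of $sol(\mathcal{S})$ is needed, which is again precisely what \cite[Theorem 4.40]{AB} delivers for solid sets. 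Combining, $|f(T(x))| \le |T'(f)|(|x|) \le g(|x|\wedge u) + \varepsilon$ for all $f\in B_{Y'}$, and taking the supremum over $f$ gives $(2)$.

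For $(2)\Rightarrow(3)$: given a uaw-null net $(x_\alpha)$ in $sol(\mathcal{S})$ and $\varepsilon>0$, pick $g\in(E')^+$ and $u\in E^+$ as in $(2)$ for $\varepsilon/2$; then $\|T(x_\alpha)\| \le g(|x_\alpha|\wedge u) + \varepsilon/2$, and since $|x_\alpha|\wedge u \xrightarrow{w} 0$ by definition of uaw-convergence, $g(|x_\alpha|\wedge u)\to 0$, so eventually $\|T(x_\alpha)\|\le\varepsilon$. The implication $(3)\Rightarrow(1)$ is immediate since sequences are nets.

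The main obstacle is the $(1)\Rightarrow(2)$ step, specifically the correct invocation of \cite[Theorem 4.40]{AB} in the ``solid hull of a norm bounded set'' setting rather than the ``order interval'' setting, and making sure the element $u$ produced genuinely controls the $\wedge u$ truncation uniformly over all of $sol(\mathcal{S})$ — one has to be careful that the disjointness hypothesis (not just uaw-nullness) is what \cite[Theorem 4.40]{AB} consumes, and that a disjoint sequence staying in $sol(\mathcal{S})$ is exactly the hypothesis of that theorem applied to the solid set $sol(\mathcal{S})$. Everything else is routine.
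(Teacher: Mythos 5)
Your implications $(2)\Rightarrow(3)$ and $(3)\Rightarrow(1)$ are fine and coincide with the paper's argument. The problem is $(1)\Rightarrow(2)$, which you yourself call the main obstacle and leave essentially at the level of ``invoke \cite[Theorem 4.40]{AB} correctly''. Two concrete gaps. First, the conclusion you attribute to that theorem, namely the existence of $g\in(E')^+$ with $(|T'(f)|-g)^{+}(u)<\varepsilon$ for all $f\in B_{Y'}$, is the \emph{order-interval} form: evaluating the excess at one vector $u$ is what one gets when the solid set is $[-u,u]$ (as in Theorem \ref{theo2}); for a general solid set $sol(\mathcal{S})$ the available conclusion controls $(|T'(f)|-g)^{+}$ on the elements of $sol(\mathcal{S})$ themselves, not at an element $u$ of the ideal it generates, and you never check which form you actually have. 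Second, the uniform smallness of $|T'(f)|$ on the tails $(|x|-u)^{+}$, $x\in sol(\mathcal{S})$, which you claim is ``again precisely what Theorem 4.40 delivers'', is asserted rather than proved: in Aliprantis--Burkinshaw this tail estimate for a norm bounded solid set is the content of Theorem 4.36, applied to the norm continuous seminorm $x\mapsto\|T(x)\|$ (whose hypothesis holds because disjoint sequences of $sol(\mathcal{S})$ are uaw-null by \cite[Lemma 2]{Zabeti}); moreover the two-set theorem requires both sets to be solid, so one must also justify passing from $T'(B_{Y'})$ to its solid hull, which you do not address. As written, the decisive estimates of the key implication are not established.

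For comparison, the paper fills exactly these two holes: it first applies \cite[Theorem 4.36]{AB} to the seminorm $x\mapsto\|T(x)\|$ on $sol(\mathcal{S})$, producing $u\in E^{+}$ in the ideal generated by $sol(\mathcal{S})$ with $\|T((|x|-u)^{+})\|\leq\varepsilon/4$ for all $x\in sol(\mathcal{S})$ (the tail control); then, instead of applying a two-set theorem to $sol(\mathcal{S})$, it reduces the truncated part to the order interval: since $u$ lies in the ideal generated by $\mathcal{S}$, Proposition \ref{pro6} shows that $T$ is $[-u,u]$-Lwc, and Theorem \ref{theo2} (this is where \cite[Theorem 4.40]{AB} is used, legitimately, in the interval setting where evaluation at $u$ makes sense) supplies $g$ with $\|T(x^{+}\wedge u)\|\leq g(x^{+}\wedge u)+\varepsilon/4$ and $\|T(x^{-}\wedge u)\|\leq g(x^{-}\wedge u)+\varepsilon/4$; the identity $|x|\wedge u=x^{+}\wedge u+x^{-}\wedge u$ then gives $(2)$. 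Your plan could be repaired along these lines (or by a careful two-sided application of the disjoint-sequence approximation machinery to the pair $sol(\mathcal{S})$ and $sol(T'(B_{Y'}))$, estimating $(|T'(f)|-g)^{+}$ at $|x|\wedge u\leq|x|$ rather than at $u$), but in its present form the central step is a genuine gap.
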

\begin{proof}
	$1)\Rightarrow 2)$
	Let $\varepsilon > 0$; since $T$ is $\mathcal{S}$-Lwc, then by \cite[Theorem 4.36] {AB} there exists some $u \in E^{+}$ lying in the ideal generated by $sol(\mathcal{S})$ such that
	$$||T((|x|-u)^{+})||\leq \frac{\varepsilon}{4} \quad \text{ for all } x\in sol(\mathcal{S}).$$
	This implies that,
	$$||T(|x|)||\leq||T(|x|\wedge u)||+\frac{\varepsilon}{4} \quad \text{ for all }  x\in sol(\mathcal{S}).$$
	Hence,
	for every $x\in sol(\mathcal{S})$, we have $$||T(x^{+})||\leq ||T(x^{+}\wedge u)||+\frac{\varepsilon}{4}$$ and $$||T(x^{-})||\leq ||T(x^{-}\wedge u)||+\frac{\varepsilon}{4}.$$
	This implies that for every $x\in sol(\mathcal{S})$, we have $$||T(x)||\leq ||T(x^{+}\wedge u)||+||T(x^{-}\wedge u)||+\frac{\varepsilon}{2}\,\,\,\, (*).$$
	On the other hand, by  Proposition \ref{pro6} $T$ is $[-u,u]$-Lwc, and so by  Theorem \ref{theo2}, there exists some $g\in (E')^+$ such that for every $x\in sol(\mathcal{S})$, we have $$||T(x^{+}\wedge u)||\leq g(x^{+}\wedge u)+\frac{\varepsilon}{4}$$ and $$||T(x^{-}\wedge u)||\leq g(x^{-}\wedge u)+\frac{\varepsilon}{4}.$$
	On the other hand, using the fact that $ |x|\wedge u= x^{+}\wedge u + x^{-}\wedge u $, we see that for every $x\in sol(\mathcal{S})$ we have $$||T(x^{+}\wedge u)||+ ||T(x^{-}\wedge u)||\leq g(|x|\wedge u)+\frac{\varepsilon}{2}\,\,\,\, (**).$$
	Therefore, by combining $(*)$ and $(**)$, we have
$$||T(x)||\leq  g(|x|\wedge u)+\varepsilon \text{ for every } x\in sol(\mathcal{S}).$$
	$2)\Rightarrow 3)$ Let $\varepsilon>0$ and $(x_\alpha)$ a net of $sol(\mathcal{S})$. By our hypothesis, there exist some $g\in (E')^+$ and $u \in E^{+}$ such that,
	$$||T(x_\alpha)||\leq g(|x_\alpha|\wedge u)+\frac{\varepsilon}{2} \text{ for all } \alpha.$$
	Since $x_\alpha\overset{uaw}\longrightarrow 0 $, then there exists some $\alpha_0$ such that $g(|x_\alpha|\wedge u)< \frac{\varepsilon}{2}$ for every $\alpha \geq \alpha_0$.  Hence, for every $\alpha \geq \alpha_0$ we have $||T(x_\alpha)||\leq \varepsilon $, which implies that $T(x_\alpha)\overset{||.||}\longrightarrow 0$. \\
	$3)\Rightarrow 1)$ Obvious.
\end{proof}

As a consequence of the previous theorem, we have the following  characterizations of M-weakly compact operators which is exactly the \cite[ Corollary 3.1] {k}.
\begin{corollary}\label{c9}
For an operator $T:E\longrightarrow F$ the following statements are equivalent:
	\begin{enumerate}
		\item $T$ is M-weakly compact.
		\item For each $\varepsilon>0$, there exist some $g\in (E')^+$ and $ u\in E^{+} $  such that $$||T(x)||\leq g(|x|\wedge u)+\varepsilon \text{ for all } x\in B_E.$$
		\item For every  net $(x_{\alpha}) \subset B_E$ such that $x_{\alpha}\overset{uaw}\longrightarrow 0$, we have
		$T(x_{\alpha})\overset{||.||}\longrightarrow 0$.
		\item For every  sequence $(x_n) \subset B_E$ such that $x_n\overset{uaw}\longrightarrow 0$, we have $T(x_n)\overset{||.||}\longrightarrow 0$.
	\end{enumerate}
\end{corollary}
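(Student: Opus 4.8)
The plan is to deduce Corollary~\ref{c9} directly from Theorem~\ref{theor} by specialising the norm bounded set $\mathcal{S}$ to the closed unit ball $B_E$, together with the already recorded observation that $T$ is M-weakly compact if and only if $T$ is $B_E$-Lwc (\cite[Corollary 3.1]{k}). Since $B_E$ is solid, $sol(B_E)=B_E$, so the statements in Theorem~\ref{theor} read verbatim as the statements $(2)$, $(3)$, $(4)$ of the corollary once we substitute $sol(\mathcal{S})=B_E$; the only genuine content to add is the passage between nets and sequences in $(3)\Leftrightarrow(4)$.

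First I would note that $(1)\Leftrightarrow(2)$, $(1)\Leftrightarrow(3)$ are immediate: by the boxed remark after Definition~\ref{definition0}, $T$ is M-weakly compact iff $T$ is $B_E$-Lwc, and then Theorem~\ref{theor} applied with $\mathcal{S}=B_E$ gives the equivalence of $B_E$-Lwc with condition $(2)$ (the inequality $\|T(x)\|\le g(|x|\wedge u)+\varepsilon$ for all $x\in B_E$) and with condition $(3)$ (norm-convergence of $T(x_\alpha)$ for every uaw-null net in $B_E$). Next, $(3)\Rightarrow(4)$ is trivial since a sequence is a net. The remaining implication $(4)\Rightarrow(1)$ is the one to argue carefully: assuming $T(x_n)\to 0$ for every uaw-null sequence in $B_E$, I would show $T$ is M-weakly compact, i.e.\ $T(x_n)\to 0$ for every norm bounded disjoint sequence $(x_n)$ in $E$. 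After normalising we may assume $(x_n)\subset B_E$, and every disjoint sequence in a Banach lattice is uaw-null (\cite[Lemma 2]{Zabeti}, recalled in the introduction), so the hypothesis $(4)$ applies directly and yields $\|T(x_n)\|\to 0$.

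The step I expect to require the most care is closing the loop from the sequential hypothesis $(4)$ back to the net-based or $\mathcal{S}$-Lwc formulation — that is, making sure that testing only on disjoint sequences (equivalently, on uaw-null \emph{sequences} in $B_E$) is enough to recover the full strength of Definition~\ref{definition0}, which quantifies over all uaw-null sequences in $sol(\mathcal{S})=B_E$. But this is exactly the classical characterisation of M-weakly compact operators via disjoint sequences combined with \cite[Corollary 3.1]{k}, so the circle $(4)\Rightarrow(1)\Rightarrow(3)\Rightarrow(4)$ closes without new estimates. In writing this up I would simply chain: $(1)\Leftrightarrow(2)\Leftrightarrow(3)$ from Theorem~\ref{theor} with $\mathcal{S}=B_E$, then $(3)\Rightarrow(4)$ trivially, then $(4)\Rightarrow(1)$ via disjoint-sequences-are-uaw-null, completing the equivalences.
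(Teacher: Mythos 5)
Your proposal is correct and follows essentially the route the paper intends: the corollary is obtained by applying Theorem~\ref{theor} with $\mathcal{S}=B_E$ (noting $sol(B_E)=B_E$) together with the identification of M-weakly compact operators with $B_E$-Lwc operators, the remaining sequential condition $(4)$ being handled exactly as you do (it is in fact literally the definition of $B_E$-Lwc, and your disjoint-sequences-are-uaw-null argument for $(4)\Rightarrow(1)$ is the standard closing step). No gaps; the only cosmetic point is that after scaling a norm bounded disjoint sequence into $B_E$ one should note the scaled sequence is still disjoint, hence uaw-null, which you implicitly do.
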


In a similar way, we may prove the following result which  present a dual version of the Theorem \ref{theor}. The proof of this theorem is similar to that of the Theorem \ref{theor}, so we will avoid copying it here.

\begin{theorem}\label{thoerem002}
Let $T:E\longrightarrow F$ be an operator and $\mathcal{S}$ a norm bounded subset of $F'$. Then, the following statements are equivalent:
	\begin{enumerate}
		\item $T'$ is $\mathcal{S}$-Lwc.
		\item For each  $\varepsilon>0$ there exist some $g\in (F')^+$ and $ u\in F^{+} $ such that $$||T'(f)||\leq (|f|\wedge g)(u)+\varepsilon \text{ for all } f \in sol(\mathcal{S}).$$
			\item For every uaw$^*$-null net $(f_{\alpha}) \subset  sol(\mathcal{S})$, we have $T'(f_{\alpha})\overset{||.||}\longrightarrow 0$.
	\item For every uaw$^*$-null sequence $(f_n) \subset sol(\mathcal{S})$, we have $T'(f_n)\overset{||.||}\longrightarrow 0$.
	\end{enumerate}
\end{theorem}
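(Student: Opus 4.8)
The plan is to run the cycle $(1)\Rightarrow(2)\Rightarrow(3)\Rightarrow(4)\Rightarrow(1)$, transcribing the proof of Theorem~\ref{theor} into the Banach lattice $F'$ and for the adjoint $T'$, while keeping careful track of whether a given object lives in $F$, in $F'$ or in $F''$; the whole gain of the dual formulation is that the dominating functional can be housed in the predual $F$, and it is precisely this that makes the weak$^{*}$ statements $(3)$ and $(4)$ legitimate.

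The routine implications come first. $(3)\Rightarrow(4)$ is clear, a sequence being a net. For $(4)\Rightarrow(1)$, since $\sigma(F',F)\subseteq\sigma(F',F'')$, a uaw-null sequence of $sol(\mathcal S)$ is a fortiori uaw$^{*}$-null, so $(4)$ forces $T'$ to send every uaw-null sequence of $sol(\mathcal S)$ to a norm-null one, i.e.\ $T'$ is $\mathcal S$-Lwc. For $(2)\Rightarrow(3)$: given $\varepsilon>0$ take $g\in(F')^{+}$, $u\in F^{+}$ from $(2)$ (for $\varepsilon/2$); if $(f_{\alpha})\subset sol(\mathcal S)$ is uaw$^{*}$-null then $|f_{\alpha}|\wedge g\to 0$ in $\sigma(F',F)$ (apply the definition of uaw$^{*}$-convergence with the test element $g$), and as $u\in F$ acts $\sigma(F',F)$-continuously, $(|f_{\alpha}|\wedge g)(u)\to 0$; hence eventually $\|T'(f_{\alpha})\|\le(|f_{\alpha}|\wedge g)(u)+\varepsilon/2<\varepsilon$. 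Were $u$ only available in $F''$, this last step would collapse, and $(2)$ would be no more than Theorem~\ref{theor} applied to $T'$.

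The content is $(1)\Rightarrow(2)$, carried out step by step as in Theorem~\ref{theor}. Since $T'$ is $\mathcal S$-Lwc as an operator out of $F'$, \cite[Theorem~4.36]{AB} gives, for each $\varepsilon>0$, an $h_{0}\in(F')^{+}$ in the ideal generated by $sol(\mathcal S)$ with $\|T'((|f|-h_{0})^{+})\|\le\varepsilon/4$ for all $f\in sol(\mathcal S)$; then, exactly as in Theorem~\ref{theor} (writing $f^{\pm}=f^{\pm}\wedge h_{0}+(f^{\pm}-h_{0})^{+}$ and using solidity of $sol(\mathcal S)$), one gets $\|T'(f)\|\le\|T'(f^{+}\wedge h_{0})\|+\|T'(f^{-}\wedge h_{0})\|+\varepsilon/2$ on $sol(\mathcal S)$. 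By Proposition~\ref{pro6} in $F'$, $T'$ is $[-h_{0},h_{0}]$-Lwc. Granting the dual of Theorem~\ref{theo2} below, we get $u\in F^{+}$ with $\|T'(f)\|\le|f|(u)+\varepsilon/4$ for every $f\in[-h_{0},h_{0}]$; applying this to $f^{+}\wedge h_{0}$ and $f^{-}\wedge h_{0}$ and adding, with $f^{+}\wedge h_{0}+f^{-}\wedge h_{0}=|f|\wedge h_{0}$, gives $\|T'(f^{+}\wedge h_{0})\|+\|T'(f^{-}\wedge h_{0})\|\le(|f|\wedge h_{0})(u)+\varepsilon/2$, and substituting into the previous estimate (with $g:=h_{0}$) yields $\|T'(f)\|\le(|f|\wedge g)(u)+\varepsilon$ for all $f\in sol(\mathcal S)$, which is $(2)$.

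The dual of Theorem~\ref{theo2} is what remains, and it is the main obstacle. The point is that, $T'$ being an adjoint, $\|T'(f)\|_{E'}=\sup_{x\in B_{E}}|f(Tx)|$, so the quantity to be controlled is already governed by the predual $F$: under this identification, $T'$ being $[-h_{0},h_{0}]$-Lwc says precisely that every disjoint sequence of $[0,h_{0}]\subset F'$ converges uniformly to zero on $T(B_{E})\subset F$. Feeding this into \cite[Theorem~4.40]{AB} (applied in $F'$, to the norm bounded set $T(B_{E})\subset F''=(F')'$) produces a dominating functional $u$ with $\sup_{x\in B_{E}}h_{0}\bigl((|Tx|-u)^{+}\bigr)\le\varepsilon$, whence for $f\in[-h_{0},h_{0}]$ and $x\in B_{E}$ one has $|f(Tx)|\le|f|(|Tx|)\le|f|(u)+h_{0}\bigl((|Tx|-u)^{+}\bigr)\le|f|(u)+\varepsilon$ (using $|Tx|\le u+(|Tx|-u)^{+}$ and $|f|\le h_{0}$), so $\|T'(f)\|\le|f|(u)+\varepsilon$ — exactly as in Theorem~\ref{theo2}. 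The delicate point, and where the argument genuinely departs from a mechanical copy, is to keep this dominating functional inside the sublattice $F$ rather than only in $F''$: this hinges on the construction underlying \cite[Theorem~4.40]{AB} producing $u$ out of finite lattice operations on the set $T(B_{E})\subset F$, and without it the implication $(2)\Rightarrow(3)$ would break down, so this is the crux of the whole theorem.
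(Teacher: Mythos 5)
Your argument is correct and is precisely the dualization of Theorem \ref{theor} that the paper intends (the paper omits the proof, declaring it ``similar''). The routine implications, the application of \cite[Theorem 4.36]{AB} in $F'$ with the test set $T(B_E)$, Proposition \ref{pro6}, and the recombination via $f^{+}\wedge h_{0}+f^{-}\wedge h_{0}=|f|\wedge h_{0}$ are all sound, and you rightly isolate the only delicate point: the dominating functional must lie in $F^{+}$ and not merely in $(F'')^{+}$, since otherwise statement (2) is nothing more than Theorem \ref{theor} applied to $T'$ and the weak$^{*}$ statements (3) and (4) become unreachable. Your appeal to the construction inside \cite[Theorem 4.40]{AB} is asserted rather than checked, but it is accurate: in the proof of that theorem the dominating element is obtained, at the stage where the inductive construction must terminate, in the form $4^{n}\sum_{i=1}^{n}|\phi_{i}|$ with $\phi_{1},\dots,\phi_{n}$ in the given norm bounded set, here $T(B_{E})\subset F$, and since the canonical copy of $F$ is a sublattice of $F''$ this element is the image of some $u\in F^{+}$, exactly as you need. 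If you prefer to rely only on stated results, the same $u\in F^{+}$ can be produced without opening up that proof: the disjoint-sequence duality theorem of Burkinshaw--Dodds (see \cite{AB} or \cite{Mey}) converts ``every disjoint sequence of $[-h_{0},h_{0}]$ converges uniformly to zero on $T(B_{E})$'' into ``every disjoint sequence of $sol(T(B_{E}))$ converges uniformly to zero on $[-h_{0},h_{0}]$'', and then \cite[Theorem 4.36]{AB} applied in $F$ to the solid set $sol(T(B_{E}))$ and the functional $h_{0}$ yields $u\in F^{+}$ with $h_{0}\bigl((|Tx|-u)^{+}\bigr)<\varepsilon$ for all $x\in B_{E}$, which gives the same estimate $\|T'(f)\|\le|f|(u)+\varepsilon$ on $[-h_{0},h_{0}]$.
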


As consequence of the Theorem \ref{thoerem002}, we obtain new characterizations of L-weakly compact operators.
\begin{corollary}\label{cooo}
Let $T:E\longrightarrow F$ be an operator. Then, the following statements are equivalent:
	\begin{enumerate}
		\item $T$ is L-weakly compact.
		\item For each  $\varepsilon>0$ there exist some $g\in (F')^+$ and $ u\in F^ {+} $  such that $$||T'(f)||\leq (|f|\wedge g)(u)+\varepsilon \text{ for all } f\in B_{F'}.$$
		\item For every  net $(f_{\alpha}) \subset B_{F'}$ such that $f_{\alpha}\overset{uaw^*}\longrightarrow 0$, we have $T'(f_{\alpha})\overset{||.||}\longrightarrow 0$.
		\item For every  sequence $(f_n) \subset B_{F'}$ such that $f_n\overset{uaw^*}\longrightarrow 0$, we have $T'(f_n)\overset{||.||}\longrightarrow 0$.
	\end{enumerate}
\end{corollary}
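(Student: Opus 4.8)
The plan is to deduce Corollary~\ref{cooo} directly from Theorem~\ref{thoerem002} by specializing to $\mathcal{S}=B_{F'}$, so that the whole argument reduces to identifying the condition ``$T'$ is $B_{F'}$-Lwc'' with ``$T$ is L-weakly compact''. First I would record the elementary fact that the closed unit ball $B_{F'}$ of the Banach lattice $F'$ is solid, whence $sol(B_{F'})=B_{F'}$; consequently the statements $(2)$, $(3)$ and $(4)$ of Corollary~\ref{cooo} are verbatim the statements $(2)$, $(3)$ and $(4)$ of Theorem~\ref{thoerem002} applied with $\mathcal{S}=B_{F'}$. Thus Theorem~\ref{thoerem002} already gives that $(2)$, $(3)$ and $(4)$ are all equivalent to the assertion that $T'$ is $B_{F'}$-Lwc, and it only remains to see that this last assertion is equivalent to $(1)$.

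For that step I would chain two known equivalences. On one hand, applying the observation recorded right after Definition~\ref{definition0}, namely \cite[Corollary~3.1]{k}, to the operator $T':F'\longrightarrow E'$ and the closed unit ball $B_{F'}$ of its domain shows that $T'$ is $B_{F'}$-Lwc if and only if $T'$ is M-weakly compact. On the other hand, by the classical duality between L-weakly compact and M-weakly compact operators (see \cite[Theorem~5.64]{AB}: an operator from a Banach space into a Banach lattice is L-weakly compact if and only if its adjoint is M-weakly compact), $T'$ is M-weakly compact if and only if $T$ is L-weakly compact. Concatenating the two, $(1)$ is equivalent to ``$T'$ is $B_{F'}$-Lwc'', hence to each of $(2)$, $(3)$ and $(4)$, which completes the proof.

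The only non-routine ingredient in this scheme is the classical equivalence ``$T$ L-weakly compact $\Longleftrightarrow$ $T'$ M-weakly compact''; everything else is a direct specialization of Theorem~\ref{thoerem002} together with the bookkeeping check that $sol(B_{F'})=B_{F'}$. The single point that needs a moment's care is that this classical duality is invoked with a Banach \emph{lattice} $E$ sitting in the domain of $T$, but this causes no difficulty, since $E$ is in particular a Banach space and the codomain $F$ is a Banach lattice, which is exactly the hypothesis of the cited result.
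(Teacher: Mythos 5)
Your proposal is correct and follows exactly the route the paper intends: the paper gives no separate proof, presenting the corollary as an immediate specialization of Theorem~\ref{thoerem002} to $\mathcal{S}=B_{F'}$ (using that $sol(B_{F'})=B_{F'}$), with statement (1) identified with ``$T'$ is $B_{F'}$-Lwc'' via the standard duality $T$ L-weakly compact $\Longleftrightarrow$ $T'$ M-weakly compact and \cite[Corollary 3.1]{k}. Your write-up simply makes these bookkeeping steps explicit, which matches the paper's argument.
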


Recall that a bounded subset $A$ in Banach lattice $E$ is said uaw-compact whenever every net
$(x_{\alpha})$  in $E $ has a subnet, which is uaw-convergent. Note that the standard basis $(e_n)$ of $\ell_1$ is uaw-null in $\ell_1$, then the set $A=\left\lbrace e_{n}: n\in \mathbb{N } \right\rbrace$ is relatively uaw-compact; but  $A$ is not relatively compact.

	\begin{proposition}\label{p}		
		Let $\mathcal{S}$ be a norm bounded subset of $E$. For an $\mathcal{S}$-Lwc operator $T:E\longrightarrow Y$, we have
		\begin{enumerate}
			\item $T(A)$ is compact for every norm bounded uaw-compact subset $A\subset sol(\mathcal{S})$.	
		\item $T(A)$ is relatively compact for every norm bounded relatively uaw-compact subset $A\subset sol(\mathcal{S})$.
		 \end{enumerate}
	\end{proposition}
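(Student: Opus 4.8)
The plan is to exploit the sequential characterization of $\mathcal{S}$-Lwc operators together with the fact that norm-compactness of the image of a bounded set can be tested on sequences. For part (2), let $A \subset sol(\mathcal{S})$ be norm bounded and relatively uaw-compact; I want to show $T(A)$ is relatively compact, i.e. every sequence in $T(A)$ has a norm-convergent subsequence. Take $(Tx_n)$ with $(x_n) \subset A$. By relative uaw-compactness of $A$, the sequence $(x_n)$ has a subnet that is uaw-convergent; but to stay within the sequential framework of Definition \ref{definition0} it is cleaner to argue as follows: pass to a subsequence whose differences behave well. Actually the clean route is to first prove (1).

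For part (1), suppose $A \subset sol(\mathcal{S})$ is norm bounded and uaw-compact (every net in $A$ has a uaw-convergent subnet with limit \emph{in $A$} — or at least the closure is what matters). To show $T(A)$ is compact it suffices to show every sequence $(x_n) \subset A$ has a subsequence $(x_{n_k})$ with $(Tx_{n_k})$ norm-Cauchy. Since $A$ is uaw-compact, $(x_n)$ has a subnet $(x_{n_\alpha})$ with $x_{n_\alpha} \overset{uaw}{\to} x$ for some $x \in A$. The key observation is that the differences $x_{n_\alpha} - x$ form a net in $sol(\mathcal{S}) - sol(\mathcal{S}) \subset sol(2\mathcal{S}) = 2\,sol(\mathcal{S})$ that is uaw-null. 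By Proposition \ref{pro8}, $T$ is $2\mathcal{S}$-Lwc, so by Theorem \ref{theor}(3) (the net version), $T(x_{n_\alpha} - x) \overset{\|\cdot\|}{\to} 0$, i.e. $Tx_{n_\alpha} \to Tx$ in norm. A net in the metric space $Y$ that converges forces the original sequence $(Tx_n)$ to have $Tx$ as a cluster point, hence a subsequence converging to $Tx$; running this on every sequence shows $T(A)$ is (sequentially, hence) compact, and since $x \in A$ the limit lies in $T(A)$.

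For part (2), the same argument applies verbatim except that the uaw-limit $x$ of the subnet lies in the uaw-closure of $A$ (which is still norm bounded, and still inside the norm closure... more carefully, inside $sol(2\mathcal{S})$ after translation), so we only conclude $Tx$ is in the norm closure of $T(A)$; this gives relative compactness of $T(A)$. Concretely: given $(x_n) \subset A$, relative uaw-compactness yields a subnet $x_{n_\alpha} \overset{uaw}{\to} x$ with $x$ in $E$; again $x_{n_\alpha} - x$ is uaw-null and eventually lies in $2\,sol(\mathcal{S})$ (since $x$ is a uaw-limit of points of $sol(\mathcal{S})$, one checks $|x|\wedge u$ is dominated appropriately — alternatively just note $(x_{n_\alpha})$ itself together with the constant net $(x_{n_1})$ shows $x$ is within reach), so Theorem \ref{theor} applied to the $2\mathcal{S}$-Lwc operator $T$ gives $Tx_{n_\alpha} \to Tx$ in norm, and hence $(Tx_n)$ has a norm-convergent subsequence.

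The main obstacle will be the translation-and-containment bookkeeping: one must verify that for a uaw-null net $(x_{n_\alpha} - x)$ arising this way, the elements actually sit inside $sol(\lambda\mathcal{S})$ for some fixed $\lambda$ so that Proposition \ref{pro8} and Theorem \ref{theor}(3) apply — the cleanest fix is to observe $sol(\mathcal{S}) - sol(\mathcal{S}) \subseteq sol(2\mathcal{S})$, or simply to pass to a subnet along which the argument is run using only elements of $sol(\mathcal{S})$ and the limit point, invoking that the uaw-limit of a net in the norm-bounded solid set $sol(\mathcal{S})$ again has $|x| \le$ something controlled, so that $|x_{n_\alpha} - x| \wedge u \to 0$ weakly with all the $x_{n_\alpha}-x$ lying in a fixed $sol(\lambda\mathcal{S})$. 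A secondary minor point is the passage from "convergent subnet of $(Tx_n)$" to "convergent subsequence", which is standard in metric spaces but should be stated.
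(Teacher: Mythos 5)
Your overall strategy (extract a uaw-convergent subnet and feed the differences into the net characterization of Theorem \ref{theor}) is the paper's strategy for part (1), but the step you yourself flag as "the main obstacle" is resolved incorrectly. The containment $sol(\mathcal{S})-sol(\mathcal{S})\subseteq sol(2\mathcal{S})=2\,sol(\mathcal{S})$ is false in general: a solid hull is symmetric and solid but not convex. For instance, with $\mathcal{S}=\{e_1,e_2\}$ in $\ell^2$ we have $e_1,-e_2\in sol(\mathcal{S})$, yet $e_1-(-e_2)=e_1+e_2\notin sol(2\mathcal{S})$, since $\tfrac12(e_1+e_2)$ is dominated by neither $e_1$ nor $e_2$. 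The correct repair is the one the paper uses: by Proposition \ref{pro4} (equivalently Corollary \ref{c10}), $T$ being $\mathcal{S}$-Lwc, hence $sol(\mathcal{S})$-Lwc, is also $\bigl(sol(\mathcal{S})+sol(\mathcal{S})\bigr)$-Lwc, and the differences $x_{n_\alpha}-x$ do lie in $sol(\mathcal{S})+sol(\mathcal{S})$ because $-x\in sol(\mathcal{S})$ when $x\in A\subset sol(\mathcal{S})$. With that substitution your proof of (1) coincides with the paper's.

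Part (2) has a second, independent gap: for a merely \emph{relatively} uaw-compact $A$, the uaw-limit $x$ of the subnet need not belong to $sol(\mathcal{S})$, nor to $sol(\lambda\mathcal{S})$ for any controllable $\lambda$ (solid hulls are not uaw-closed), so neither "$x_{n_\alpha}-x$ eventually lies in $2\,sol(\mathcal{S})$" nor your parenthetical fallbacks are justified; this is precisely the point where the argument breaks, and no appeal to Theorem \ref{theor} is available for a set in which $x$ is not controlled. The paper avoids the limit point altogether: the uaw-convergent subnet is uaw-Cauchy, so the double net $(x_\alpha-x_\beta)_{(\alpha,\beta)}$ is uaw-null and lies in $sol(\mathcal{S})+sol(\mathcal{S})$; by Corollary \ref{c10} and Theorem \ref{theor} the net $(T(x_\alpha))$ is norm Cauchy, hence norm convergent since $Y$ is complete, which gives relative compactness of $T(A)$. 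Your remaining point, passing from a convergent subnet of $(Tx_n)$ to a convergent subsequence in the metric space $Y$, is indeed standard and harmless.
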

\begin{proof}
	$1)$ Let $A$ be a  uaw-compact subset of $sol(\mathcal{S})$ and let $(x_{\alpha})$ be a net of $A$, then there exist a subnet  of $(x_{\alpha})$  which we denoted by $(x_{\alpha})$ and $x \in A$ such that $x_{\alpha} \overset{uaw}\longrightarrow x$. As $T$ is $\mathcal{S}$-Lwc, then by  Corollary \ref{c10} and Theorem \ref{theor} , $T(x_{\alpha} )\overset{||.||}\longrightarrow T(x)$. Therefore, $T(A)$ is a compact subset of $Y$.

	$2)$ Let $A$ be a relatively uaw-compact subset of $sol(\mathcal{S})$ and let $(x_{\alpha})$ be a net of $A$, then $(x_{\alpha})$ has a uaw-convergent subnet which we denoted by $(x_{\alpha})$. Since the subnet $(x_{\alpha})$ is norm bounded and uaw-Cauchy, then  the double net $(x_{\alpha}-x_{\beta})_{(\alpha,\beta)}$ is uaw-null. As $T$ is $\mathcal{S}$-Lwc, then by  Theorem \ref{theor} and Corollary \ref{c10} the net $(T(x_{\alpha}))$ is norm Cauchy and hence $(T(x_{\alpha}))$ is norm convergent. Therefore, $T(A)$ is a relatively compact subset of $Y$.
\end{proof}
In the following result we give characterizations of  $\mathcal{S}$-Lwc operators which are uaw-continuous.

\begin{proposition}\label{prop0001}
	Let $\mathcal{S}$ be a norm bounded subset of $E$. For a uaw-continuous operator $T:E\longrightarrow F$ the following statements are equivalent:
\begin{enumerate}
	\item $T$ is $\mathcal{S}$-Lwc.
	\item $T(A)$ is relatively compact for every relatively  uaw-compact subset $A\subset sol(\mathcal{S})$.
	\item $T(A)$ is compact for every   uaw-compact subset $A\subset sol(\mathcal{S})$.

\end{enumerate}
\end{proposition}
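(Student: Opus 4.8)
The plan is to prove $1)\Rightarrow 2)$, $1)\Rightarrow 3)$ and then ``$2)$ or $3)$'' $\Rightarrow 1)$, which together give the equivalence. First note that every subset of $sol(\mathcal{S})$ is automatically norm bounded, since $|x|\le|y|$ forces $\|x\|\le\|y\|$ and $\mathcal{S}$ is bounded; hence Proposition~\ref{p} applies to \emph{every} subset of $sol(\mathcal{S})$ and yields at once $1)\Rightarrow 3)$ (compactness of $T(A)$ for every uaw-compact $A\subset sol(\mathcal{S})$) and $1)\Rightarrow 2)$ (relative compactness of $T(A)$ for every relatively uaw-compact $A\subset sol(\mathcal{S})$). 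Observe that these two implications do not use the uaw-continuity of $T$.

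For the converse I would argue by contraposition, using the auxiliary fact that \emph{if $(x_n)$ is a uaw-null sequence then $A:=\{x_n:n\in\mathbb{N}\}\cup\{0\}$ is a uaw-compact set}. Granting this, suppose $T$ is not $\mathcal{S}$-Lwc; then there are a uaw-null sequence $(x_n)\subset sol(\mathcal{S})$ and $\varepsilon>0$ with $\|T(x_n)\|\ge\varepsilon$ for all $n$ (after passing to a subsequence). The set $A=\{x_n:n\in\mathbb{N}\}\cup\{0\}$ lies in $sol(\mathcal{S})$ and is uaw-compact, hence also relatively uaw-compact, so whichever of $2)$, $3)$ is assumed, $T(A)$ is relatively compact. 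Therefore the sequence $(T(x_n))\subset T(A)$ admits a norm-convergent subsequence $T(x_{n_k})\longrightarrow y$, with $\|y\|=\lim_k\|T(x_{n_k})\|\ge\varepsilon$. But $x_n\overset{uaw}\longrightarrow 0$ together with the uaw-continuity of $T$ gives $T(x_{n_k})\overset{uaw}\longrightarrow 0$, while norm convergence gives $T(x_{n_k})\overset{uaw}\longrightarrow y$; since uaw-limits are unique, $y=0$, contradicting $\|y\|\ge\varepsilon$. Hence $T$ is $\mathcal{S}$-Lwc.

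It remains to verify the auxiliary fact. Let $(y_\beta)$ be a net with values in $A=\{x_n:n\in\mathbb{N}\}\cup\{0\}$. If $(y_\beta)$ is frequently equal to some fixed $a\in A$, restricting to the cofinal set $\{\beta:y_\beta=a\}$ yields a constant subnet, which uaw-converges to $a\in A$. Otherwise, for each $a\in A$ the net is eventually different from $a$, and since $A$ is countable, for every $N$ the finitely many conditions ``$y_\beta\neq 0$'' and ``$y_\beta\neq x_j$, $1\le j<N$'' hold eventually, so $(y_\beta)$ is eventually contained in $\{0\}\cup\{x_m:m\ge N\}$; then for $u\in E^{+}$ and $\phi\in E'$ one gets $\limsup_\beta|\phi(|y_\beta|\wedge u)|\le\sup_{m\ge N}|\phi(|x_m|\wedge u)|$ for every $N$, and the right-hand side tends to $0$ as $N\to\infty$ since $x_m\overset{uaw}\longrightarrow 0$, whence $y_\beta\overset{uaw}\longrightarrow 0\in A$. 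In either case a subnet uaw-converges to a point of $A$, so $A$ is uaw-compact. This auxiliary fact is the step I expect to be the main obstacle: the care lies in dealing with \emph{arbitrary} nets — not merely sequences — valued in the countable set $A$, where one cannot take a single ``diagonal'' over all indices but must instead exploit that, for each fixed $u$ and each functional, the net eventually sits on a tail on which uaw-smallness is already under control. Everything else is routine, the only other ingredients being that norm convergence implies uaw-convergence and that uaw-limits are unique.
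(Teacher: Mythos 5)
Your proof is correct and follows essentially the same route as the paper: Proposition \ref{p} gives the implications out of $1)$, and the converse hinges on the set $A=\{x_n:n\in\mathbb{N}\}\cup\{0\}$ associated with a uaw-null sequence together with the uaw-continuity of $T$ to identify the limit as $0$. You are in fact somewhat more careful than the paper, which asserts the uaw-compactness of $A$ without proof and passes from one convergent subsequence to norm convergence of the whole sequence without the extraction argument your contrapositive set-up makes unnecessary.
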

\begin{proof}

	$1)\Rightarrow 2)$ Follows from the Proposition \ref{p}.\\
	$2)\Rightarrow 3)$ Obvious.\\
	$3)\Rightarrow 1)$ Let $(x_n)$ be a uaw-null sequence of $sol(\mathcal{S})$, then  the set $A=\left\lbrace x_{n}: n\in \mathbb{N } \right\rbrace \cup\left\lbrace 0\right\rbrace $ is  uaw-compact, and hence it follows from our hypothesis that $T(A)$ is relatively compact. So, there exist a subsequence $(x_{\phi(n)})$ of $(x_n)$ and $y \in T(A)$ such that $T(x_{\phi(n)}) \overset{||.||}\longrightarrow y$. Since the sequence $(x_n)$ is uaw-null, then  $x_{\phi(n)} \overset{uaw}\longrightarrow 0$ and since $T$ is uaw-continuous then $y=0$. Therefore, $T(x_{n}) \overset{||.||}\longrightarrow 0$ which implies that $T$ is $\mathcal{S}$-Lwc.

\end{proof}

An immediate consequence of the previous result is the following.
\begin{corollary}
	For a uaw-continuous operator $T:E\longrightarrow F$, the following statements are equivalent:
\begin{enumerate}	
\item $T$ is M-weakly compact.
	\item $T(A)$ is  compact for every   uaw-compact subset $A\subset B_E$.
	\item $T(A)$ is relatively compact for every relatively  uaw-compact subset $A\subset B_E$.
\end{enumerate}
\end{corollary}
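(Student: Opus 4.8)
The plan is to deduce this corollary directly from Proposition \ref{prop0001} by specializing the norm bounded set $\mathcal{S}$ to the closed unit ball $B_E$. The only preliminary observation needed is that $B_E$ is a solid subset of $E$: if $|x|\leq |y|$ with $y\in B_E$, then $\|x\|\leq \|y\|\leq 1$, so $x\in B_E$, whence $sol(B_E)=B_E$. Consequently the subsets $A\subset sol(\mathcal{S})$ occurring in Proposition \ref{prop0001} are exactly the subsets $A\subset B_E$ occurring in statements $(2)$ and $(3)$ of the present corollary, and there is no discrepancy between the two formulations.

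Next I would recall, from the discussion following Definition \ref{definition0} (which invokes \cite[Corollary 3.1]{k}), that an operator $T:E\longrightarrow F$ is M-weakly compact if and only if it is $B_E$-Lwc. Thus statement $(1)$ here is precisely statement $(1)$ of Proposition \ref{prop0001} for $\mathcal{S}=B_E$; statement $(3)$ here is statement $(2)$ there; and statement $(2)$ here is statement $(3)$ there. Since $T$ is assumed uaw-continuous, Proposition \ref{prop0001} applies verbatim with $\mathcal{S}=B_E$ and yields the equivalence of the three conditions.

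There is essentially no obstacle in this argument: all the work is done by Proposition \ref{prop0001}, and the corollary merely records its special case $\mathcal{S}=B_E$. The one point worth making explicit in the write-up is the identity $sol(B_E)=B_E$, so that the reader sees that the quantifier "$A\subset sol(B_E)$" in Proposition \ref{prop0001} and the quantifier "$A\subset B_E$" in the corollary describe the same family of sets.
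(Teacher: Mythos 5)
Your proof is correct and is essentially the paper's argument: the corollary is presented there as an immediate specialization of Proposition \ref{prop0001} to $\mathcal{S}=B_E$, combined with the identification of M-weakly compact operators with $B_E$-Lwc operators from \cite[Corollary 3.1]{k}. Your explicit remark that $sol(B_E)=B_E$ (so the quantifiers over subsets of $sol(\mathcal{S})$ and of $B_E$ coincide) is a small but worthwhile clarification that the paper leaves implicit.
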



As another consequence of  Theorem \ref{theor} and Proposition \ref{prop0001}, we have the following characterizations of L-weakly compact sets.
\begin{corollary}\label{coo}
For a norm bounded subset $\mathcal{S}$ of $E$, the following statements are equivalent:
	\begin{enumerate}
		\item $\mathcal{S}$ is L-weakly compact.
		\item For each $\varepsilon>0$, there exist some $g\in (E')^+$ and $ u\in E^{+} $ such that $$||x||\leq g(|x|\wedge u)+\varepsilon \text{ for every } x \in sol(\mathcal{S}).$$
		\item For every  net $(x_{\alpha}) \subset  sol(\mathcal{S})$ such that $x_{\alpha}\overset{uaw}\longrightarrow 0$, we have $x_{\alpha}\overset{||.||}\longrightarrow 0$.
		\item Every norm bounded   uaw-compact subset $A\subset sol(\mathcal{S})$ is compact.
			\item Every norm bounded  relatively   uaw-compact subset $A\subset sol(\mathcal{S})$ is relatively compact.
	\end{enumerate}
\end{corollary}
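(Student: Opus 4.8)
The plan is to reduce the whole statement to the identity operator $Id_E\colon E\longrightarrow E$. The key observation, recorded just after Definition~\ref{definition0}, is that for a norm bounded subset $\mathcal{S}$ of $E$ one has: $\mathcal{S}$ is L-weakly compact if and only if $Id_E$ is $\mathcal{S}$-Lwc. Hence statement~(1) is precisely the assertion ``$Id_E$ is $\mathcal{S}$-Lwc'', and the corollary will follow by specializing the two tools already established, namely Theorem~\ref{theor} and Proposition~\ref{prop0001}, to $T=Id_E$; there is essentially nothing new to prove.

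First I would derive $(1)\Leftrightarrow(2)\Leftrightarrow(3)$ by applying Theorem~\ref{theor} with $T=Id_E$. Since $\|Id_E(x)\|=\|x\|$, condition~(2) of Theorem~\ref{theor} becomes verbatim condition~(2) of the corollary, and condition~(3) of Theorem~\ref{theor}, which for $T=Id_E$ asserts that every uaw-null net of $sol(\mathcal{S})$ is norm null, is verbatim condition~(3) of the corollary. As condition~(1) of Theorem~\ref{theor} is ``$Id_E$ is $\mathcal{S}$-Lwc'', i.e.\ condition~(1) of the corollary, the chain $(1)\Leftrightarrow(2)\Leftrightarrow(3)$ follows immediately.

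Next I would derive $(1)\Leftrightarrow(4)\Leftrightarrow(5)$ by applying Proposition~\ref{prop0001} with $T=Id_E$; this is legitimate because $Id_E$ is trivially uaw-continuous. Since $Id_E(A)=A$ for every $A$, condition~(2) of Proposition~\ref{prop0001} reads ``every relatively uaw-compact $A\subset sol(\mathcal{S})$ is relatively compact'' (condition~(5) of the corollary, the words ``norm bounded'' being automatic since a uaw-compact set is bounded by definition), and condition~(3) of Proposition~\ref{prop0001} reads ``every uaw-compact $A\subset sol(\mathcal{S})$ is compact'' (condition~(4) of the corollary). Joining this to the first chain through the common vertex~(1) links all five statements.

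I do not anticipate a real obstacle: all the analytic substance (the appeals to \cite[Theorem~4.36]{AB} and \cite[Theorem~4.40]{AB}, the Riesz decomposition arguments) has already been absorbed into Theorem~\ref{theor}, Theorem~\ref{theo2}, Proposition~\ref{pro6} and Proposition~\ref{prop0001}. The only two points needing a moment's care are: (i) confirming that $Id_E$ is uaw-continuous, so that Proposition~\ref{prop0001} genuinely applies; and (ii) keeping track of the net-versus-sequence formulations --- the definition of $\mathcal{S}$-Lwc is phrased with sequences while Theorem~\ref{theor}(3) and Proposition~\ref{prop0001} are phrased with nets, so one should invoke the net/sequence equivalence already contained in Theorem~\ref{theor} rather than re-proving it.
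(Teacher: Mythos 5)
Your proposal is correct and is exactly the route the paper intends: the corollary is stated there as an immediate consequence of Theorem~\ref{theor} and Proposition~\ref{prop0001} applied to $T=Id_E$ (which is trivially uaw-continuous), using the observation after Definition~\ref{definition0} that $\mathcal{S}$ is L-weakly compact iff $Id_E$ is $\mathcal{S}$-Lwc. Your two points of care (uaw-continuity of the identity and the boundedness/net bookkeeping) are handled the same way, so there is nothing to add.
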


Note that from \cite[Proposition 3.4]{k}, it is easy to see that an operator $T:E\longrightarrow Y$ is order weakly compact if and only if
for every $v\in E^{+}$, the operator $T$ is $\{v\}$-Lwc.

With the help of  Theorem \ref{theor} and Proposition \ref{p}, we are now in a position to
present new characterizations of the  order weakly compact operators.

\begin{theorem}\label{prr}
	For an operator $T:E\longrightarrow Y$, the following statements are equivalent:
	\begin{enumerate}
		\item $T$ is order weakly compact.	
		\item For every relatively compact subset $\mathcal{S}$ of $E$, the operator $T$ is $\mathcal{S}$-Lwc.
		\item For every $v\in E^{+}$, the operator $T$ is $\{v\}$-Lwc.
		\item For every $\varepsilon>0$ and $v\in E^+$, there exist some $g\in (E')^+$ and $ u\in E^{+} $ such that $$||T(x)||\leq g(|x|\wedge u)+\varepsilon \text{ for every } x \in [-v,v].$$
		\item For every $ v\in E^+$ and for every  net $(x_{\alpha}) \subset [-v,v]$ such that $x_{\alpha}\overset{uaw}\longrightarrow 0$, we have $T(x_{\alpha})\overset{||.||}\longrightarrow 0$.
		\item For every $ v\in E^+$ and for every  sequence $(x_n) \subset [-v,v]$ such that $x_n\overset{uaw}\longrightarrow 0$, we have $T(x_n)\overset{||.||}\longrightarrow 0$.
		\item  For every order bounded  uaw-compact subset $A$ of $E$, $T(A)$ is  compact.
		\item  For every order bounded relatively  uaw-compact subset $A$ of $E$, $T(A)$ is relatively compact.	
	\end{enumerate}
\end{theorem}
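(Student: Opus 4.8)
The plan is to derive all eight equivalences from the remark preceding the statement together with Theorem~\ref{theor} (applied to singletons) and Proposition~\ref{p}, so that the only genuinely new argument is for the two implications $(7)\Rightarrow(1)$ and $(8)\Rightarrow(1)$. Concretely I would establish $(1)\Leftrightarrow(3)$, then $(3)\Leftrightarrow(4)\Leftrightarrow(5)\Leftrightarrow(6)$, then $(2)\Leftrightarrow(3)$, then $(3)\Rightarrow(7)$ and $(3)\Rightarrow(8)$, and finally $(7)\Rightarrow(1)$ and $(8)\Rightarrow(1)$; since every statement then becomes equivalent to $(3)$, this closes the list.

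The routine part goes as follows. The equivalence $(1)\Leftrightarrow(3)$ is exactly the remark stated just before the theorem (which comes from \cite[Proposition~3.4]{k}). Since $sol(\{v\})=[-v,v]$ for $v\in E^{+}$, applying Theorem~\ref{theor} with $\mathcal{S}=\{v\}$, for every $v\in E^{+}$, yields $(3)\Leftrightarrow(4)$ and $(3)\Leftrightarrow(5)$ at once; $(3)\Leftrightarrow(6)$ is immediate from Definition~\ref{definition0}, and $(5)\Rightarrow(6)$ is trivial. For $(2)\Leftrightarrow(3)$: a singleton is relatively compact, so $(2)\Rightarrow(3)$ is clear; conversely, assuming $(3)$, let $\mathcal{S}\subset E$ be relatively compact and $(x_{n})\subset sol(\mathcal{S})$ be uaw-null, pick $y_{n}\in\mathcal{S}$ with $|x_{n}|\le|y_{n}|$, and, passing to a subsequence, assume $y_{n}\to y$ in norm. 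With $v:=|y|$ the truncations $z_{n}:=(x_{n}\vee(-v))\wedge v$ lie in $[-v,v]$, are uaw-null because $|z_{n}|=|x_{n}|\wedge v\le|x_{n}|$, and satisfy $|x_{n}-z_{n}|=(|x_{n}|-v)^{+}\le(|y_{n}|-v)^{+}\le|y_{n}-y|$, so $\|x_{n}-z_{n}\|\to0$; by $(3)$, $Tz_{n}\to0$, hence $Tx_{n}\to0$ (a standard subsequence argument), so $T$ is $\mathcal{S}$-Lwc. Finally $(3)\Rightarrow(7)$ and $(3)\Rightarrow(8)$ are instances of Proposition~\ref{p} with $\mathcal{S}=\{v\}$, since an order bounded set is contained in some $[-v,v]=sol(\{v\})$.

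It remains to show $(7)\Rightarrow(1)$ and $(8)\Rightarrow(1)$, which I would prove by contraposition. If $T$ is not order weakly compact, then by the classical characterization of such operators in terms of disjoint sequences \cite{AB} there are $v\in E^{+}$ and a disjoint sequence $(x_{n})\subset[0,v]$ with $\|Tx_{n}\|\not\to0$; after passing to a subsequence we may assume $\|Tx_{n}\|\ge\delta>0$ for all $n$. The set $A:=\{x_{n}:n\in\mathbb{N}\}\cup\{0\}$ is order bounded, and, a disjoint sequence being uaw-null, $A$ is uaw-compact (a uaw-null sequence together with $0$ is uaw-compact). I claim $T(A)$ is not even relatively compact: otherwise $(Tx_{n})$ would have a norm-convergent subsequence $Tx_{n_{k}}\to w$ with $\|w\|\ge\delta$, and after discarding finitely many terms $\|Tx_{n_{k}}-w\|<\|w\|/2$, whence $\bigl\|\sum_{k=1}^{K}Tx_{n_{k}}\bigr\|\ge K\|w\|/2\to\infty$; but the $x_{n_{k}}$ are pairwise disjoint and positive, so $0\le\sum_{k=1}^{K}x_{n_{k}}=\bigvee_{k=1}^{K}x_{n_{k}}\le v$, hence $\bigl\|T\bigl(\sum_{k=1}^{K}x_{n_{k}}\bigr)\bigr\|\le\|T\|\,\|v\|$ for all $K$, a contradiction. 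Thus $A$ is an order bounded uaw-compact (in particular relatively uaw-compact) set whose image is not relatively compact, which contradicts both $(7)$ and $(8)$.

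The main obstacle is exactly this last step. The tempting shortcut is to invoke Proposition~\ref{prop0001}, but that requires $T$ to be uaw-continuous, a property that fails for order weakly compact operators in general (for instance the identity of $\ell^{1}$ is order weakly compact but not uaw-continuous), so compactness of $T(A)$ alone cannot force $Tx_{n}\to0$; indeed $T$ could in principle be asymptotically constant and nonzero along a uaw-null subsequence, in which case all the compactness conditions remain satisfied. The device that rules this out is to take $(x_{n})$ disjoint inside a positive order interval: then every partial sum $\sum_{k=1}^{K}x_{n_{k}}$ has norm at most $\|v\|$, which is precisely what blocks $T$ from being asymptotically a nonzero constant along a subsequence and lets the contradiction go through.
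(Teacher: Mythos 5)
Your proposal is correct, but the two substantive steps are handled differently from the paper. For the passage from order intervals to relatively compact sets, the paper proves $(1)\Rightarrow(2)$ directly: from relative compactness of $\mathcal{S}$ it extracts a single $u\in E^{+}$ with $\|(|y|-u)^{+}\|\le\varepsilon/(4\|T\|)$ for all $y\in\mathcal{S}$, and then combines this with Theorem~\ref{theo2}; you instead prove $(3)\Rightarrow(2)$ by choosing dominating elements $y_{n}\in\mathcal{S}$, passing to a norm-convergent subsequence $y_{n}\to y$, truncating at $v=|y|$ via $z_{n}=(x_{n}\vee(-v))\wedge v$ and using $|x_{n}-z_{n}|=(|x_{n}|-v)^{+}\le|y_{n}-y|$ plus a subsequence argument. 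Both are sound; yours avoids the $\varepsilon$-net/quantitative estimate (and the harmless division by $\|T\|$) at the cost of a sub-subsequence argument, while the paper's gives a uniform estimate in one pass. The bigger divergence is in $(7),(8)\Rightarrow(1)$: the paper argues directly, observing that an order bounded uaw-null sequence is weakly null, so relative compactness of its image forces $T(x_{n})\to0$ in norm (and it only writes this for $(8)$, leaving $(7)$ implicit); you argue by contraposition, invoking the classical disjoint-sequence characterization of order weakly compact operators from \cite{AB} to produce a disjoint sequence in $[0,v]$ with $\|Tx_{n}\|\ge\delta$, and then showing its image cannot be relatively compact because a norm-convergent subsequence would make the partial sums $\sum_{k\le K}Tx_{n_{k}}$ blow up while $\sum_{k\le K}x_{n_{k}}\le v$ keeps them bounded. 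Your route buys an explicit treatment of $(7)$ and does not need the weak-nullity observation, but it leans on the deeper cited theorem of Aliprantis--Burkinshaw, whereas the paper's direct argument is shorter and stays within the tools already developed; your diagnosis that uaw-continuity (Proposition~\ref{prop0001}) cannot be used here is also accurate.
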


\begin{proof}
	It remains to show $1)\Rightarrow 2)$ and 	$8)\Rightarrow 1)$, the other implications are already seen before in the preceding results.

	$1)\Rightarrow 2)$ Let $T$ be an order weakly compact operator, $\mathcal{S}$ a relatively compact set of $E$ and $(x_n) $ a uaw-nul sequence of $sol(\mathcal{S})$ and let $\varepsilon > 0$. Since $(x_n) \subset sol(\mathcal{S})$ and $\mathcal{S}$ a relatively compact set of $E$, then there exist $u \in E^{+}$ and a sequence $(y_n) \subset \mathcal{S} $ such that
	$$||(x_{n}^{+}-u)^{+}||\leq ||(|y_n|-u)^{+}||\leq\frac{\varepsilon}{4||T||} \quad \text{ for all } n,$$
	which implies that for all $n$, we have $$||T(x_{n}^{+})||\leq ||T(x_{n}^{+}\wedge u)||+\frac{\varepsilon}{4}.$$
	Since $T$ is   order weakly compact then, $T$ is $[-u,u]$-Lwc operator, thus by Theorem \ref{theo2}, there exists $g\in (E')^+$ such that $$||T(x_{n}^{+}\wedge u)||\leq g(x_{n}^{+}\wedge u)+\frac{\varepsilon}{4} \text{ for all } n,$$
	So for every $n$ we have $$||T(x_{n}^{+})||\leq g(x_{n}^{+}\wedge u)+\frac{\varepsilon}{2}\quad \text{ for all } n.$$
	Since $x_n\overset{uaw}\longrightarrow 0 $, then there exists $n_0$ such that $g(x_{n}^{+}\wedge u)< \frac{\varepsilon}{2}$ for all $n\geq n_0 $. Thus, for each $n \geq n_0$ we have $||T(x_{n}^{+})||\leq \varepsilon $, which implies that $T(x_{n}^{+})\overset{||.||} \longrightarrow 0$.
	 By the same reason, we found $T(x_{n}^{-})\overset{||.||}\longrightarrow 0$. Hence, $T(x_{n})\overset{||.||}\longrightarrow 0$, as desired.

	 	 	$8)\Rightarrow 1)$ Let $v\in E^+ $ and $(x_n) \subset [-v,v]$ be a uaw-null sequence. The set $A=\left\lbrace x_{n}: n\in \mathbb{N } \right\rbrace  $ is  relatively uaw compact, implies that $(T(x_n))$ is relatively compact. By observing that the sequence $(x_n)$ is weakly null we infer that $T(x_{n}) \overset{||.||}\longrightarrow 0$.\\
\end{proof}

As a consequence of Theorem \ref{prr}, we obtain the following  characterizations of order continuous Banach lattices.

\begin{corollary}\label{co100}
 The following statements are equivalent:
\begin{enumerate}
	\item  $E$ is order continuous.	
	\item Every relatively compact subset $\mathcal{S}$ of $E$ is L-weakly compact.
	\item For every $x\in E^{+},\,\, Id_E \,\,\text{is} \, $\{x\}$\text{-Lw}$.
	\item For every $\varepsilon>0$ and $v\in E^+$, there exist some $g\in (E')^+$ and $ u\in E^{+} $ such that$$||x||\leq g(|x|\wedge u)+\varepsilon \text{ for all } x \in [-v,v].$$
	\item For every $ v\in E^+$ and for every  net $(x_{\alpha}) \subset [-v,v]$ such that $x_{\alpha}\overset{uaw}\longrightarrow 0$, we have $x_{\alpha}\overset{||.||}\longrightarrow 0$.
	\item For every $ v\in E^+$ and for every  sequence $(x_n) \subset [-v,v]$ such that $x_n\overset{uaw}\longrightarrow 0$, we have $x_n\overset{||.||}\longrightarrow 0$.	
	\item Every order bounded  uaw-compact subset $A\subset E$ is compact.
		\item Every order bounded  relatively  uaw-compact subset $A\subset E$ is relatively compact.
\end{enumerate}
\end{corollary}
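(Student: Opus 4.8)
The plan is to deduce this corollary from Theorem \ref{prr} by specializing that theorem to the identity operator $Id_E : E \longrightarrow E$. The first point to record is that $Id_E$ is an order weakly compact operator precisely when $E$ is order continuous: by definition $Id_E$ is order weakly compact if and only if $Id_E([0,x]) = [0,x]$ is relatively weakly compact for every $x \in E^{+}$, and relative weak compactness of all order intervals is the classical characterization of order continuity of a Banach lattice (see \cite{AB}); since each order interval is convex and norm closed, ``relatively weakly compact'' may be read here as ``weakly compact''.

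Next I would invoke the observation made right after Definition \ref{definition0} (a restatement of \cite[Proposition 3.3]{k}): for a norm bounded subset $\mathcal{S}$ of $E$, the operator $Id_E$ is $\mathcal{S}$-Lwc if and only if $\mathcal{S}$ is an L-weakly compact subset of $E$. With these two identifications in hand, one simply substitutes $T = Id_E$ into the eight equivalent conditions of Theorem \ref{prr}, using that then $T(x) = x$, $\|T(x)\| = \|x\|$ and $T(A) = A$. Condition $(1)$ of Theorem \ref{prr} becomes ``$E$ is order continuous''; condition $(2)$ becomes ``every relatively compact subset of $E$ is L-weakly compact''; condition $(3)$ becomes ``$Id_E$ is $\{x\}$-Lwc for every $x \in E^{+}$''; conditions $(4)$–$(6)$ become the norm estimate and the net/sequence convergence statements with $T(x)$ replaced by $x$; and conditions $(7)$–$(8)$ become the assertion that every order bounded (respectively, relatively) uaw-compact subset of $E$ is (relatively) compact. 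These are exactly statements $(1)$–$(8)$ of the corollary, so their mutual equivalence is immediate from Theorem \ref{prr}.

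There is no genuine obstacle here; the whole content lies in Theorem \ref{prr} together with the two preliminary identifications above. The only external ingredient is the classical equivalence between order continuity of $E$ and relative weak compactness of all order intervals $[0,x]$, $x \in E^{+}$, which I would cite from \cite{AB} rather than reprove. If one insisted on being self-contained, the nontrivial direction (order continuity $\Rightarrow$ order intervals weakly compact) can be recovered from the fact that in an order continuous Banach lattice every order bounded disjoint sequence is norm null, combined with a standard disjointification argument, but this is routine and I would not reproduce it.
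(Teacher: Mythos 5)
Your proposal is correct and is exactly the route the paper intends: the corollary is stated as an immediate consequence of Theorem \ref{prr} specialized to $T = Id_E$, using the classical fact that $Id_E$ is order weakly compact iff order intervals are weakly compact iff $E$ is order continuous, together with the observation after Definition \ref{definition0} that $Id_E$ is $\mathcal{S}$-Lwc iff $\mathcal{S}$ is L-weakly compact. No gap; your two preliminary identifications are precisely what the paper leaves implicit.
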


Recall from \cite{b-semicompact} that a subset $A$ of a Banach lattice $E$ is said to be b-semi compact
 if it is almost order bounded as a subset of $E^{\prime
\prime }$, that is, for every $\varepsilon >0$ there exists $u\in
E_{+}^{\prime \prime }$ such that $A\subset \left[ -u,u\right] +\varepsilon
B_{E^{\prime \prime }}$.

By repeating the proof of Theorem \ref{prr}, we can prove a similar result for b-weakly compact operators as follows.

\begin{theorem}\label{prop0101}
	For an operator $T:E \longrightarrow Y$, the following statements are equivalent:
	\begin{enumerate}
\item  $T$ is b-weakly compact.	
\item For every b-semi compact subset $\mathcal{S}$ of $E$, the operator $T$ is $\mathcal{S}$-Lwc.
\item For every $v \in (E'')^+,\,\, T \,\,\text{is} \, ([-v,v]\cap E)\text-{Lwc}$.
\item For each  $\varepsilon>0$ and $ v\in (E'')^+ $, there exist some $g\in (E')^+$ and $ u\in E^{+} $  such that $$||T(x)||\leq g(|x|\wedge u)+\varepsilon \text{ for all } x \in [-v,v]\cap E.$$
\item For every $v\in (E'')^+ $  and for every  net $(x_{\alpha}) \subset [-v,v]\cap E$ such that $x_{\alpha}\overset{uaw}\longrightarrow 0$, we have $T(x_{\alpha})\overset{||.||}\longrightarrow 0$.
\item For every $ v\in (E'')^+$ and for every  sequence $(x_n) \subset [-v,v]\cap E$ such that $x_n\overset{uaw}\longrightarrow 0$, we have $T(x_n)\overset{||.||}\longrightarrow 0$.
\item  For every b-order bounded   uaw-compact subset $A\subset E$, $T(A)$ is compact.
\item  For every b-order bounded  relatively uaw-compact subset $A\subset E$, $T(A)$ is relatively compact.
	\end{enumerate}
\end{theorem}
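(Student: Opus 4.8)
The plan is to follow the proof of Theorem~\ref{prr} almost verbatim, replacing the order intervals $[-v,v]$, $v\in E^{+}$, by the b-order intervals $[-v,v]\cap E$, $v\in(E'')^{+}$, and ``relatively compact'' by ``b-semi compact''. First note that $[-v,v]\cap E$ is a \emph{solid} norm bounded subset of $E$, so $sol([-v,v]\cap E)=[-v,v]\cap E$. Hence, for each fixed $v\in(E'')^{+}$, Theorem~\ref{theor} applied to $\mathcal S=[-v,v]\cap E$ yields the equivalences $(3)\Leftrightarrow(4)\Leftrightarrow(5)\Leftrightarrow(6)$, and Proposition~\ref{p} yields $(3)\Rightarrow(7)$ and $(3)\Rightarrow(8)$, since every b-order bounded subset of $E$ is contained in $sol([-v,v]\cap E)$ for a suitable $v$. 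Moreover $(2)\Rightarrow(3)$ is immediate: $[-v,v]\cap E$ is order bounded in $E''$, hence a fortiori b-semi compact. So the substantive points are $(1)\Leftrightarrow(3)$, $(1)\Rightarrow(2)$, and the return implications $(7)\Rightarrow(1)$, $(8)\Rightarrow(1)$.

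The equivalence $(1)\Leftrightarrow(3)$ is the b-analogue of the remark preceding Theorem~\ref{prr} (which was \cite[Proposition~3.4]{k}). For $(3)\Rightarrow(1)$: if $T$ is $([-v,v]\cap E)$-Lwc for every $v\in(E'')^{+}$, then, since disjoint sequences are uaw-null and $sol([-v,v]\cap E)=[-v,v]\cap E$, $T$ carries every disjoint sequence of $E^{+}$ that is order bounded in $E''$ to a norm-null sequence, and this is precisely the disjoint-sequence characterization of b-weakly compact operators. For $(1)\Rightarrow(3)$: fix $v\in(E'')^{+}$ and $\varepsilon>0$; by that same characterization $T$ carries the disjoint sequences of $sol([-v,v]\cap E)$ to norm-null sequences, so \cite[Theorem~4.36]{AB}, applied to $\mathcal S=[-v,v]\cap E$ exactly as in the proof of $1)\Rightarrow 2)$ of Theorem~\ref{theor}, provides $u\in E^{+}$ (in the ideal generated by $[-v,v]\cap E$) with $\|T((|x|-u)^{+})\|\le\varepsilon/4$ for all $x\in[-v,v]\cap E$; since $T$ is also order weakly compact, hence $[-u,u]$-Lwc, Theorem~\ref{theo2} supplies $g\in(E')^{+}$ with $\|T(y)\|\le g(|y|)+\varepsilon/4$ on $[-u,u]$, and combining the two estimates as in Theorem~\ref{theor} gives condition $(4)$, hence $(3)$.

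For $(1)\Rightarrow(2)$ I would mimic the proof of $1)\Rightarrow 2)$ in Theorem~\ref{prr}, replacing the almost-order-boundedness in $E$ of a relatively compact set by the almost-order-boundedness in $E''$ of a b-semi compact set: given a b-semi compact $\mathcal S$, a uaw-null sequence $(x_{n})\subset sol(\mathcal S)$ and $\varepsilon>0$, pick $w\in(E'')^{+}$ with $\mathcal S\subset[-w,w]+\tfrac{\varepsilon}{4\|T\|}B_{E''}$; the Riesz decomposition property in $E''$ then gives $\|(x_{n}^{\pm}-w)^{+}\|_{E''}\le\tfrac{\varepsilon}{4\|T\|}$, so that, passing to the bidual operator $T'':E''\longrightarrow Y''$ and writing $x_{n}^{\pm}=x_{n}^{\pm}\wedge w+(x_{n}^{\pm}-w)^{+}$ in $E''$, one obtains $\|T(x_{n}^{\pm})-T''(x_{n}^{\pm}\wedge w)\|\le\varepsilon/4$, and the b-weak compactness of $T$ should force $T''(x_{n}^{\pm}\wedge w)\longrightarrow 0$, whence $\|T(x_{n})\|\longrightarrow 0$. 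I expect this last point to be the main obstacle: one must run the $\mathcal S$-Lwc machinery for $T''$ on the genuinely bidual order interval $[0,w]$, where $w\notin E$ in general, and since $E$ need not be an ideal of $E''$ the bookkeeping with the clipped vectors $x_{n}^{\pm}\wedge w$ is the delicate part — concretely one would reduce, using the band decomposition of $E''$ along the band generated by $E$ together with the disjoint-sequence characterization applied to $T''$, to a situation already handled in $(1)\Leftrightarrow(3)$.

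Finally $(7)\Rightarrow(1)$ and $(8)\Rightarrow(1)$ go as in Theorem~\ref{prr}: for $v\in(E'')^{+}$ and a uaw-null sequence $(x_{n})\subset[-v,v]\cap E$, the set $A=\{x_{n}:n\in\mathbb{N}\}\cup\{0\}$ is b-order bounded and uaw-compact, so $T(A)$ is (relatively) compact by hypothesis $(7)$ (resp.\ $(8)$); moreover $(x_{n})$ is weakly null — writing $v_{0}=\sup\{y\in E^{+}:y\le v\}$ one has $|x_{n}|\le v_{0}$, and clipping $|x_{n}|$ by elements of $E^{+}$ below $v_{0}$, using the uaw-nullness together with the order continuity on $E''$ of the functionals in $(E')^{+}$, gives $\langle|x_{n}|,f\rangle\to 0$ — hence $T(x_{n})\to 0$, i.e.\ $(6)$, and therefore $(1)$.
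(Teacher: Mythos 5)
Most of your skeleton is sound and is in fact what the paper intends (it prints no proof, only the remark that one ``repeats the proof of Theorem~\ref{prr}''): $[-v,v]\cap E$ is a solid norm bounded subset of $E$, so Theorem~\ref{theor} gives $(3)\Leftrightarrow(4)\Leftrightarrow(5)\Leftrightarrow(6)$, Proposition~\ref{p} gives $(3)\Rightarrow(7),(8)$, $(2)\Rightarrow(3)$ is immediate, and invoking the (external but standard) disjoint-sequence characterization of b-weakly compact operators for $(1)\Leftrightarrow(3)$ is legitimate; your justification that a uaw-null sequence of $[-v,v]\cap E$ is weakly null (take $v_{0}=\sup\{u\in E^{+}:u\le v\}$ in $E''$, use that every $f\in(E')^{+}$ is order continuous on $E''$ and clip by $u\in[0,v]\cap E$ increasing to $v_{0}$) can be made rigorous, so $(7)/(8)\Rightarrow(1)$ is fine.

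The genuine gap is the one you flag yourself, namely $(1)\Rightarrow(2)$, and it is not a cosmetic one: this is exactly the point where the proof of Theorem~\ref{prr} does not transfer. There, relative compactness of $\mathcal S$ yields almost order boundedness \emph{in $E$}, so the truncations $x_{n}^{\pm}\wedge u$ stay in $E$ and Theorem~\ref{theo2} applies to them; for a b-semi compact $\mathcal S$ the truncating element $w$ lies in $(E'')^{+}$, and since $E$ is in general only a closed sublattice, not an ideal, of $E''$, the elements $x_{n}^{\pm}\wedge w$ need not belong to $E$, so neither statement $(3)$ nor Theorem~\ref{theo2} can be applied to them. Your sketched repair does not close this: b-weak compactness gives you control of $T$ on the sets $[-v,v]\cap E$, not of $T''$ on the bidual interval $[0,w]$, and to run the argument you would in addition need the dominating functional for $T''$ to come from $E'$ (not from $E'''$), because uaw-nullness of $(x_{n})$ is only tested against $E'$ and $E^{+}$; moreover the band $B$ generated by $E$ in $E''$ is indeed a projection band, but its band projection maps $E''$ onto $B$, not into $E$, so projecting $x_{n}^{\pm}\wedge w$ does not return you to ``a situation already handled in $(1)\Leftrightarrow(3)$''. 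As written, the crucial claim $T''(x_{n}^{\pm}\wedge w)\to 0$ is precisely what has to be proved, and no argument is given; the same obstruction reappears if one tries instead to verify the hypothesis of \cite[Theorem 4.36]{AB} for $sol(\mathcal S)$ (one would need $\|Td_{n}\|\to 0$ for disjoint $(d_{n})\subset sol(\mathcal S)$, which again cannot be reduced to b-order bounded disjoint sequences by truncating at $w$). Since $(1)\Rightarrow(2)$ is the only implication in which b-semi compactness actually enters, this missing step is the heart of the theorem, and some genuinely additional argument beyond ``repeat Theorem~\ref{prr}'' is required to establish it.
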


As a consequence of  Theorem \ref{prop0101}, we have the following characterizations of KB-spaces.
	\begin{corollary}\label{c200}
The following statements are equivalent:
	\begin{enumerate}
		\item  $E$ is a KB space.
		\item Every b-semi compact subset $\mathcal{S}$ of $E$  is L-weakly compact.			
		\item For every $v \in (E'')^+,\,\, Id_E \,\,\text{is } \, ([-v,v]\cap E)\text{-Lwc}$.
		\item For each  $\varepsilon>0$ and $v\in (E'')^+ $, there exist some $g\in (E')^+$ and $ u\in E^{+} $  such that $$||x||\leq g(|x|\wedge u)+\varepsilon \text{ for all }x \in [-v,v]\cap E.$$
\item For every $ v\in (E'')^+$ and for every  net $(x_{\alpha}) \subset [-v,v]\cap E$ such that $x_{\alpha}\overset{uaw}\longrightarrow 0$, we have $x_{\alpha}\overset{||.||}\longrightarrow 0$.
\item For every $ v\in (E'')^+$ and for every  sequence $(x_n) \subset [-v,v]\cap E$ such that $x_n\overset{uaw}\longrightarrow 0$, we have $x_n\overset{||.||}\longrightarrow 0$.	
\item Every b-order bounded    uaw-compact subset $A\subset E$ is  compact.
\item Every b-order bounded  relatively  uaw-compact subset $A\subset E$ is relatively compact.
		\end{enumerate}
\end{corollary}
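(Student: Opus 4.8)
The plan is to obtain this corollary from Theorem \ref{prop0101} in exactly the same way that Corollary \ref{co100} was derived from Theorem \ref{prr}: specialize the target space to $Y = E$ and take $T = Id_E$. Under this substitution, items (2)--(8) of Theorem \ref{prop0101} become verbatim items (2)--(8) of the present corollary (with $\|T(x)\| = \|x\|$, $T(x_\alpha) \to 0$ replaced by $x_\alpha \to 0$, and $T(A)$ compact replaced by $A$ compact, and the ``$Id_E$ is $([-v,v]\cap E)$-Lwc'' formulations matching the $\mathcal{S}$-Lwc clauses). So the content of items (2)--(8) is already equivalent among themselves and is equivalent to ``$Id_E$ is b-weakly compact''. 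The only genuinely new step is to identify the condition ``$Id_E$ is b-weakly compact'' with ``$E$ is a KB-space'', i.e. to prove (1)$\Leftrightarrow$(2) in the chain.

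For that equivalence I would recall the definition: $Id_E$ is b-weakly compact iff $Id_E([-v,v]\cap E) = [-v,v]\cap E$ is relatively weakly compact for every $v\in (E'')^+$, i.e. iff every b-order bounded subset of $E$ is relatively weakly compact. It is a classical fact (see Aliprantis--Burkinshaw, or the original b-weakly compact operator literature of Alpay--Altin--Tonyali) that $E$ is a KB-space precisely when every b-order bounded subset of $E$ is relatively weakly compact; equivalently, $E$ is a KB-space iff the identity on $E$ is b-weakly compact. I would either cite this directly or reprove it quickly: if $E$ is a KB-space then $E$ is order continuous and a band in $E''$, so $[-v,v]\cap E$ is contained in an order interval of $E$ (one checks $[-v,v]\cap E \subseteq [-w,w]$ for a suitable $w\in E^+$ using that $E$ is a projection band), hence weakly compact by order continuity; conversely, if $Id_E$ is b-weakly compact, apply it to $v = \sup x_n$ in $E''$ for an increasing norm-bounded positive sequence $(x_n)\subset E$ to get a weakly convergent subsequence, and since $(x_n)$ is increasing it is itself weakly, hence norm, convergent, giving the KB property.

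The remaining implications are then routine and mirror the proof of Theorem \ref{prr}: for (1)$\Rightarrow$(2), a b-semi compact set $\mathcal{S}$ is by definition almost order bounded in $E''$, so for $\varepsilon>0$ pick $u\in (E'')^+$ with $\mathcal{S}\subset[-u,u]+\varepsilon B_{E''}$; given a uaw-null sequence $(x_n)\subset sol(\mathcal{S})$, split $x_n^{\pm}$ through the $[-u,u]\cap E$ part plus a small error and use Theorem \ref{theo2} (via Proposition \ref{pro6}) on the order-interval part exactly as in the $1)\Rightarrow 2)$ argument of Theorem \ref{prr}, concluding $\|x_n\|\to 0$, i.e. $\mathcal{S}$ is L-weakly compact. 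For (2)$\Rightarrow$(3): an order interval $[-v,v]\cap E$ with $v\in(E'')^+$ is b-semi compact, so it is L-weakly compact, and $Id_E$ restricted there is the identity, which is trivially L-weakly compact on an L-weakly compact set; hence $Id_E$ is $([-v,v]\cap E)$-Lwc. Then (3)$\Leftrightarrow$(4)$\Leftrightarrow$(5)$\Leftrightarrow$(6) is Theorem \ref{theor} applied with $\mathcal{S}=[-v,v]\cap E$ and $T=Id_E$, and (3)$\Rightarrow$(7), (7)$\Rightarrow$(8) (obvious), (8)$\Rightarrow$(1) follow the Proposition \ref{p} / $8)\Rightarrow 1)$ pattern of Theorem \ref{prr}, using that a uaw-convergent order-bounded (here b-order-bounded) sequence is weakly convergent on the relevant interval. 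The main obstacle, such as it is, is simply making the bookkeeping of the $\varepsilon/4$-splittings clean in (1)$\Rightarrow$(2) and pinning down the standard characterization of KB-spaces via b-order boundedness; everything else is a transcription of the already-proved Theorem \ref{prop0101} with $T=Id_E$.
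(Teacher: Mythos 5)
Your proposal is correct and matches the paper's (implicit) argument: the corollary is obtained by specializing Theorem \ref{prop0101} to $T=Id_E$, so that items (2)--(8) are verbatim the theorem's items, and then invoking the standard fact that $E$ is a KB-space if and only if $Id_E$ is b-weakly compact (equivalently, every b-order bounded subset of $E$ is relatively weakly compact). Your extra sketch of that KB characterization and the re-transcription of the theorem's implications are sound but not needed beyond the citation.
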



In terms of relatively weakly compact sets and $\mathcal{S}$-Lwc operators the  almost Dunford-Pettis
operators are characterized as follows.

\begin{proposition}\label{p4}
	For an operator $T:E\longrightarrow Y$, the following statements are equivalent:
	\begin{enumerate}
		\item  $T$ is almost Dunford-Pettis.	
		\item  For every relatively weakly compact subset $\mathcal{S}$ of $E$, the operator $T$ is $\mathcal{S}$-Lwc.
		\item For every relatively weakly compact subset $\mathcal{S}$ of $E$ and for every  net $(x_{\alpha}) \subset sol(\mathcal{S})$ such that $x_{\alpha}\overset{uaw}\longrightarrow 0$, we have $T(x_{\alpha})\overset{||.||}\longrightarrow 0$.
		\item For every relatively weakly compact subset $\mathcal{S}$ of $E$ and for every  sequence $(x_n) \subset sol(\mathcal{S})$ such that $x_n\overset{uaw}\longrightarrow 0$, we have $T(x_n)\overset{||.||}\longrightarrow 0$.	
	\end{enumerate}
\end{proposition}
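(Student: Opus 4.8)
The plan is to organize the proof around the single nontrivial equivalence $(1)\Leftrightarrow(2)$. Indeed, unwinding Definition \ref{definition0}, statement $(2)$ asserts exactly that for every relatively weakly compact $\mathcal{S}\subset E$ and every uaw-null sequence $(x_n)\subset sol(\mathcal{S})$ one has $T(x_n)\overset{||.||}\longrightarrow 0$, which is verbatim statement $(4)$; and $(2)\Leftrightarrow(3)$ is obtained by applying the equivalence $(1)\Leftrightarrow(3)$ of Theorem \ref{theor} to each relatively weakly compact $\mathcal{S}$. So it suffices to prove $(1)\Leftrightarrow(2)$.

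For $(2)\Rightarrow(1)$ I would argue directly. Let $(x_n)$ be a disjoint weakly null sequence in $E$ and put $\mathcal{S}:=\{x_n:n\in\mathbb{N}\}\cup\{0\}$; this is a relatively weakly compact set, being a weakly convergent sequence together with its limit. Since $(x_n)$ is disjoint it is uaw-null by \cite[Lemma 2]{Zabeti}, and clearly $(x_n)\subset\mathcal{S}\subset sol(\mathcal{S})$. Hence $(2)$, applied to this $\mathcal{S}$, yields $T(x_n)\overset{||.||}\longrightarrow 0$, so $T$ is almost Dunford-Pettis.

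The substance is $(1)\Rightarrow(2)$. The main obstacle I foresee is bridging the gap between the hypothesis --- $T$ controls only \emph{disjoint} weakly null sequences --- and the conclusion, which demands that $T$ control \emph{all} uaw-null sequences in $sol(\mathcal{S})$. The key input for this is the classical fact that every disjoint sequence lying in the solid hull of a relatively weakly compact subset of a Banach lattice is weakly null (see, e.g., \cite{Mey}); since $E$ need not be order continuous, one cannot here replace ``relatively weakly compact'' by ``order bounded'' or ``approximately order bounded'' as was done in Theorems \ref{prr} and \ref{prop0101}, and this disjoint-sequence statement is exactly the right substitute. Granting it, fix a relatively weakly compact $\mathcal{S}\subset E$. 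Because $T$ is almost Dunford-Pettis, it follows that $\|T(x_n)\|\to 0$ for every disjoint sequence $(x_n)\subset sol(\mathcal{S})$. From here the proof follows that of $(1)\Rightarrow(2)$ in Theorem \ref{theor}: for a given $\varepsilon>0$, \cite[Theorem 4.36]{AB} provides $u\in E^{+}$, lying in the ideal generated by $sol(\mathcal{S})$, with $\|T((|x|-u)^{+})\|\le\varepsilon/4$ for all $x\in sol(\mathcal{S})$, and therefore $\|T(x)\|\le\|T(x^{+}\wedge u)\|+\|T(x^{-}\wedge u)\|+\varepsilon/2$ for all $x\in sol(\mathcal{S})$.

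It remains to bound the order-bounded terms $T(x^{\pm}\wedge u)$. Here I would invoke that an almost Dunford-Pettis operator is automatically order weakly compact: every order bounded disjoint sequence is uaw-null and, being order bounded, weakly null, so $T$ maps it to a norm null sequence, which is the standard characterization of order weak compactness (see \cite{AB}). Hence $T$ is $[-u,u]$-Lwc by Theorem \ref{prr}, and Theorem \ref{theo2} then furnishes $g\in(E')^{+}$ with $\|T(y)\|\le g(|y|)+\varepsilon/4$ for every $y\in[-u,u]$. Taking $y=x^{+}\wedge u$ and $y=x^{-}\wedge u$ and using $|x|\wedge u=x^{+}\wedge u+x^{-}\wedge u$, one gets $\|T(x)\|\le g(|x|\wedge u)+\varepsilon$ for all $x\in sol(\mathcal{S})$, so $T$ is $\mathcal{S}$-Lwc by the implication $(2)\Rightarrow(1)$ of Theorem \ref{theor}. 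This gives $(1)\Rightarrow(2)$ and closes the chain $(1)\Leftrightarrow(2)\Leftrightarrow(3)\Leftrightarrow(4)$.
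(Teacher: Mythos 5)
Your proof is correct, but your treatment of the key implication $(1)\Rightarrow(2)$ follows a genuinely different route from the paper's. The paper disposes of it in two lines by quoting two external results: \cite[Theorem 3.1]{k}, which says that a uaw-null sequence $(x_n)\subset sol(\mathcal{S})$ with $\mathcal{S}$ relatively weakly compact has $x_n^{+}\overset{w}\to 0$ and $x_n^{-}\overset{w}\to 0$, and \cite[Theorem 2.2]{aq}, which lets an almost Dunford--Pettis operator kill these positive weakly null sequences in norm; the paper then closes the cycle $(1)\Rightarrow(2)\Rightarrow(3)\Rightarrow(4)\Rightarrow(1)$, with your $(2)\Rightarrow(1)$ argument being exactly its $(4)\Rightarrow(1)$. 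You instead re-run the quantitative machinery of Theorems \ref{theo2} and \ref{theor}: you feed \cite[Theorem 4.36]{AB} with the classical fact that disjoint sequences in the solid hull of a relatively weakly compact set are weakly null (Aliprantis--Burkinshaw Theorem 4.34 / Meyer-Nieberg), handle the order bounded part by noting that almost Dunford--Pettis operators are order weakly compact (order bounded disjoint sequences are weakly null \emph{because they are both order bounded and disjoint} --- your parenthetical justification omits the disjointness, though the claim you need is true), and conclude via the domination estimate of Theorem \ref{theor}$(2)$. Both arguments are valid; yours buys independence from the companion paper \cite{k} and from \cite{aq}, producing the $\varepsilon$--$g$--$u$ estimate explicitly along the way, at the cost of a longer argument, while the paper's proof is shorter but leans on the uaw-convergence result established elsewhere by the same authors. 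Your reorganization of the remaining equivalences ($(2)\Leftrightarrow(4)$ by Definition \ref{definition0}, $(2)\Leftrightarrow(3)$ by Theorem \ref{theor}) is also fine and slightly cleaner than the paper's cycle.
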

\begin{proof}

	$1)\Rightarrow 2)$ Let $\mathcal{S}$ be a relatively weakly compact subset of $E$ and $(x_n)$ be a uaw-null sequence of $sol(\mathcal{S})$,
then by \cite[Theorem 3.1] {k}, we have $x_n^{+}\overset{w}\longrightarrow 0$ and $x_n^{-}\overset{w}\longrightarrow 0$. On the other hand, since $T$ is almost Dunford-Pettis,
it follows from  \cite[Theorem 2.2] {aq} that $T(x_n^{+})\overset{||.||}\longrightarrow 0$ and $T(x_n^{-})\overset{||.||}\longrightarrow 0$, and so $T(x_n)\overset{||.||}\longrightarrow 0$. Therefore, $T$ is $\mathcal{S}$-Lwc.

$2)\Rightarrow 3)$  Obvious.

$3)\Rightarrow 4)$ Obvious.

$4)\Rightarrow 1)$ Let $(x_n)$ be a disjoint weakly null sequence of $E$. Put  $K=\left\lbrace x_{n}: n\in \mathbb{N } \right\rbrace $, we note that $K$ is relatively weakly compact  and the sequence $(x_n)$ is uaw-null, hence by our
hypothesis $T(x_n)\overset{||.||}\longrightarrow 0$, as desired.

\end{proof}

As consequences of  Proposition \ref{prop0001} and  Proposition \ref{p4}, we have the following results which present new characterizations of almost Dunford-Pettis operators  which are uaw-continuous.

\begin{corollary}\label{p66}
	For a uaw-continuous operator $T:E\longrightarrow F$, the following statements are equivalent:
	\begin{enumerate}	
	\item  $T$ is almost Dunford-Pettis.
\item  For every relatively weakly compact subset $\mathcal{S}$ of $E$ and for every   uaw-compact subset $A$ of $sol(\mathcal{S})$, we have $T(A)$ is compact.
\item  For every relatively weakly compact subset $\mathcal{S}$ of $E$ and for every  relatively  uaw-compact subset $A$ of $sol(\mathcal{S})$, we have $T(A)$ is relatively compact.
	\end{enumerate}
\end{corollary}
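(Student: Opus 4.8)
\textit{Proof proposal.} The plan is to read off all three equivalences from Proposition \ref{p4} and Proposition \ref{prop0001} by lining up the quantifier ``for every relatively weakly compact subset $\mathcal{S}$ of $E$'' on both sides. The first step is to restate Proposition \ref{p4} in the form actually needed: by the equivalence $(1)\Leftrightarrow(2)$ there, an operator $T:E\longrightarrow F$ is almost Dunford--Pettis if and only if $T$ is $\mathcal{S}$-Lwc for every relatively weakly compact subset $\mathcal{S}$ of $E$. (Here one only has to note that a relatively weakly compact set is automatically norm bounded, so that the notions $sol(\mathcal{S})$, $\mathcal{S}$-Lwc, etc., make sense.)

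Next I would invoke Proposition \ref{prop0001}. Since $T$ is uaw-continuous by hypothesis, that proposition applies to \emph{every} norm bounded subset $\mathcal{S}$ of $E$, and in particular to every relatively weakly compact one. It yields, for each such $\mathcal{S}$, the equivalence of the three assertions: $T$ is $\mathcal{S}$-Lwc; $T(A)$ is relatively compact for every relatively uaw-compact $A\subset sol(\mathcal{S})$; and $T(A)$ is compact for every uaw-compact $A\subset sol(\mathcal{S})$. Quantifying these three equivalent statements over all relatively weakly compact $\mathcal{S}\subset E$ and combining with the reformulation of Proposition \ref{p4} from the previous paragraph gives $(1)\Leftrightarrow(3)$ and $(1)\Leftrightarrow(2)$, hence the full cycle $(1)\Leftrightarrow(2)\Leftrightarrow(3)$.

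There is essentially no hard step here; the proof is a bookkeeping combination of two already-established propositions. The only points that require a little care are that the uaw-continuity hypothesis is exactly what licenses the use of Proposition \ref{prop0001}, and that the quantifier over relatively weakly compact sets passes through the equivalences correctly. If one prefers a self-contained cyclic argument in place of this quantifier juggling, then: $(1)\Rightarrow(3)$ follows from Proposition \ref{p4} together with Proposition \ref{p}(2); $(3)\Rightarrow(2)$ follows because a uaw-compact set is in particular relatively uaw-compact, so $T(A)$ is relatively compact, while the uaw-continuity of $T$ combined with the argument used for Proposition \ref{p}(1) shows that $T(A)$ is also closed; and $(2)\Rightarrow(1)$ is obtained, exactly as in the proof of $(3)\Rightarrow(1)$ of Proposition \ref{prop0001}, by taking a disjoint weakly null sequence $(x_n)$ in $E$, observing that $K=\left\lbrace x_n:n\in\mathbb{N}\right\rbrace\cup\left\lbrace 0\right\rbrace$ is both relatively weakly compact and uaw-compact with $K\subset sol(K)$, concluding that $T(K)$ is compact, and then using uaw-continuity to force every norm cluster point of $(T(x_n))$ to be $0$, whence $T(x_n)\overset{||.||}\longrightarrow 0$.
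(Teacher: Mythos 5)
Your proof is correct and follows exactly the paper's intended route: the corollary is stated there as an immediate consequence of Proposition \ref{p4} and Proposition \ref{prop0001}, which is precisely your quantifier-bookkeeping argument (your alternative cyclic argument is a harmless extra). Nothing is missing.
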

New characterizations of the positive Schur property are obtained as a consequence of Proposition \ref{p4} and Corollary \ref{p66}.
	\begin{corollary}\label{schur}
 The following statements are equivalent:
\begin{enumerate}
	\item $E$ has the positive Schur property.	
	\item Every relatively weakly compact subset of $E$ is L-weakly compact.
	\item For every relatively weakly compact subset $\mathcal{S}$ of $E$ and for every  net $(x_{\alpha}) \subset sol(\mathcal{S})$ such that $x_{\alpha}\overset{uaw}\longrightarrow 0$, we have $x_{\alpha}\overset{||.||}\longrightarrow 0$.
	\item For every relatively weakly compact subset $\mathcal{S}$ of $E$ and for every  sequence $(x_n) \subset sol(\mathcal{S})$ such that $x_n\overset{uaw}\longrightarrow 0$, we have $x_n\overset{||.||}\longrightarrow 0$.	
	\item  For every relatively weakly compact subset $\mathcal{S}$ of $E$ and for every    uaw-compact subset $A$ of $sol(\mathcal{S})$, we have $A$ is  compact.
		\item  For every relatively weakly compact subset $\mathcal{S}$ of $E$ and for every  relatively uaw-compact subset $A$ of $sol(\mathcal{S})$, we have $A$ is relatively compact.
	\end{enumerate}
\end{corollary}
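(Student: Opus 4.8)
The plan is to obtain Corollary~\ref{schur} by specializing Proposition~\ref{p4} and Corollary~\ref{p66} to the identity operator $Id_E$ of $E$. The one external ingredient is the classical fact that $E$ has the positive Schur property if and only if $Id_E$ is almost Dunford-Pettis: indeed, by \cite[Theorem 2.2]{aq} (the very result invoked in the proof of Proposition~\ref{p4}), an operator $T$ is almost Dunford-Pettis exactly when $T(x_n)\overset{||.||}\longrightarrow 0$ for every positive weakly null sequence $(x_n)$, so $Id_E$ is almost Dunford-Pettis precisely when every positive weakly null sequence of $E$ is norm null, which is the definition of the positive Schur property.

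Granting this, the remaining work is bookkeeping. Applying Proposition~\ref{p4} with $T=Id_E$ and using $Id_E(x_\alpha)=x_\alpha$, statements $(3)$ and $(4)$ of Corollary~\ref{schur} are verbatim statements $(3)$ and $(4)$ of Proposition~\ref{p4}, hence equivalent to $Id_E$ being almost Dunford-Pettis, hence to $(1)$. For $(2)$: statement $(2)$ of Proposition~\ref{p4} says that for every relatively weakly compact $\mathcal{S}\subset E$ the operator $Id_E$ is $\mathcal{S}$-Lwc; since a relatively weakly compact set is norm bounded, the observation following Definition~\ref{definition0} (that is, \cite[Proposition 3.3]{k}) tells us $Id_E$ is $\mathcal{S}$-Lwc if and only if $\mathcal{S}$ is L-weakly compact, so this is exactly $(2)$. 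Hence $(1)\Leftrightarrow(2)\Leftrightarrow(3)\Leftrightarrow(4)$.

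For $(5)$ and $(6)$ I would invoke Corollary~\ref{p66}. The identity operator is trivially uaw-continuous (if $x_\alpha\overset{uaw}\longrightarrow x$ then $Id_E(x_\alpha)\overset{uaw}\longrightarrow x$), so Corollary~\ref{p66} applies with $T=Id_E$; reading its statements $(2)$ and $(3)$ and using $Id_E(A)=A$ gives precisely statements $(5)$ and $(6)$ of Corollary~\ref{schur}, each equivalent to $Id_E$ being almost Dunford-Pettis, hence to $(1)$. Combining the two chains closes the circle of equivalences.

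The only point that really needs care is the first paragraph: one must be sure that the ``almost Dunford-Pettis identity'' condition coincides with the positive Schur property and is not confused with a superficially different disjoint-sequence condition. Once that identification is made via \cite{aq}, everything else is a mechanical substitution $T=Id_E$ into results already proved in the paper, requiring no new lattice-theoretic argument.
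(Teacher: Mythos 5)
Your proposal is correct and matches the paper's intended argument: the paper gives no separate proof, asserting only that the corollary follows from Proposition~\ref{p4} and Corollary~\ref{p66}, which is exactly your specialization to $T=Id_E$ together with the identification (via \cite[Theorem 2.2]{aq}) of the positive Schur property with $Id_E$ being almost Dunford--Pettis and of $Id_E$ being $\mathcal{S}$-Lwc with $\mathcal{S}$ being L-weakly compact.
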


The last result of this section presents a set characterization of Banach lattices with weakly sequentially continuous lattice operations.
\begin{proposition}\label{prrrr}
   The following statements are equivalent:
	\begin{enumerate}
		\item The lattice operations of $E$ are weakly sequentially continuous.
		\item  Every relatively  weakly compact subset of $E$ is relatively uaw-compact.
	\end{enumerate}
\end{proposition}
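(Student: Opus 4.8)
The plan is to prove $(1)\Leftrightarrow(2)$ directly, the only non-elementary ingredient being \cite[Theorem 3.1]{k}. Two remarks will be used repeatedly. First, since $E'$ is a Banach lattice, $0\le a_{\alpha}\le b_{\alpha}$ and $b_{\alpha}\overset{w}\longrightarrow 0$ imply $a_{\alpha}\overset{w}\longrightarrow 0$ (apply this to $\varphi^{+}$ and $\varphi^{-}$ for $\varphi\in E'$); consequently, if $|z_{\alpha}-z|\overset{w}\longrightarrow 0$ then $0\le|z_{\alpha}-z|\wedge u\le|z_{\alpha}-z|$ gives $|z_{\alpha}-z|\wedge u\overset{w}\longrightarrow 0$ for every $u\in E^{+}$, i.e. $z_{\alpha}\overset{uaw}\longrightarrow z$. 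Second, the lattice operations of $E$ are weakly sequentially continuous if and only if $|z_{n}|\overset{w}\longrightarrow 0$ for every weakly null sequence $(z_{n})$: indeed if $x_{n}\overset{w}\longrightarrow x$ then $x_{n}-x\overset{w}\longrightarrow 0$, and $\bigl||x_{n}|-|x|\bigr|\le|x_{n}-x|$ combined with the first remark yields $|x_{n}|\overset{w}\longrightarrow|x|$.

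$(1)\Rightarrow(2)$. Given a relatively weakly compact $A$ and a net $(x_{\alpha})$ in $A$, I would use that $\overline{A}^{w}$ is weakly compact to pass to a subnet with $x_{\alpha}\overset{w}\longrightarrow x\in\overline{A}^{w}$. On the weakly compact set $\overline{A}^{w}-x$ the weakly sequentially continuous map $z\mapsto|z|$ is in fact weakly continuous: a weakly compact subset of a Banach space, with the weak topology, is Fr\'echet--Urysohn, and a sequentially continuous map on such a space is continuous. Hence $|x_{\alpha}-x|\overset{w}\longrightarrow 0$, and by the first remark $x_{\alpha}\overset{uaw}\longrightarrow x$; so $A$ is relatively uaw-compact. (If relative uaw-compactness is understood only sequentially, Eberlein--Smulian lets one carry out the same computation with sequences and dispense with the topological point.)

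$(2)\Rightarrow(1)$. I would argue by contraposition. If the lattice operations of $E$ are not weakly sequentially continuous, the second remark produces a weakly null sequence $(z_{n})$ with $|z_{n}|\not\overset{w}\longrightarrow 0$; passing to a subsequence and replacing the witnessing functional by its modulus one obtains $0\le f\in E'$ and $\varepsilon>0$ with $f(|z_{n}|)\ge\varepsilon$ for all $n$, while still $z_{n}\overset{w}\longrightarrow 0$. Then $S:=\{z_{n}:n\in\mathbb{N}\}$ is relatively weakly compact, hence by $(2)$ relatively uaw-compact, so the sequence $(z_{n})$ admits a uaw-convergent subnet $z_{n_{\gamma}}\overset{uaw}\longrightarrow y\in E$. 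The net $(z_{n_{\gamma}}-y)$ is uaw-null and lies in $sol(S-y)$ with $S-y$ relatively weakly compact, so \cite[Theorem 3.1]{k} gives $(z_{n_{\gamma}}-y)^{+}\overset{w}\longrightarrow 0$ and $(z_{n_{\gamma}}-y)^{-}\overset{w}\longrightarrow 0$; adding, $|z_{n_{\gamma}}-y|\overset{w}\longrightarrow 0$, whence $z_{n_{\gamma}}\overset{w}\longrightarrow y$. Since the subnet also satisfies $z_{n_{\gamma}}\overset{w}\longrightarrow 0$, uniqueness of weak limits forces $y=0$; therefore $|z_{n_{\gamma}}|=|z_{n_{\gamma}}-y|\overset{w}\longrightarrow 0$, in particular $f(|z_{n_{\gamma}}|)\to 0$, contradicting $f(|z_{n_{\gamma}}|)\ge\varepsilon$.

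The delicate step is the net/sequence mismatch in $(2)\Rightarrow(1)$: property $(2)$ yields only a uaw-convergent \emph{subnet} of $(z_{n})$, so one must invoke \cite[Theorem 3.1]{k} in the form valid for nets contained in $sol$ of a relatively weakly compact set — the argument there does not use countability of the index set — or else first extract, via Eberlein--Smulian applied to the relatively weakly compact set $S$, an honest uaw-convergent subsequence. Apart from this, the proof is elementary lattice algebra once the two opening remarks are in place.
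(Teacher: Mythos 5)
Your proof is correct in substance, but it takes a genuinely different route from the paper's. The paper settles $(1)\Rightarrow(2)$ by simply citing \cite[Corollary 3.5]{k}, and proves $(2)\Rightarrow(1)$ operator-theoretically: for an almost Dunford--Pettis operator $T:E\longrightarrow\ell^{\infty}$ and a relatively weakly compact set $S$, Proposition \ref{p4} makes $T$ an $S$-Lwc operator, Proposition \ref{p} then makes $T(S)$ relatively compact because $S$ is, by hypothesis, relatively uaw-compact; hence every almost Dunford--Pettis operator into $\ell^{\infty}$ is Dunford--Pettis, and \cite[Corollary 2.4]{BEl} yields weak sequential continuity of the lattice operations. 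You instead reprove $(1)\Rightarrow(2)$ directly (angelicity of weakly compact sets gives the Fr\'echet--Urysohn property, which upgrades the weakly sequentially continuous modulus to a weakly continuous map on $\overline{A}^{w}-x$), and you prove $(2)\Rightarrow(1)$ by a hands-on contradiction based on \cite[Theorem 3.1]{k}. What your route buys is independence from \cite{BEl} and from the machinery of Propositions \ref{p4} and \ref{p}; what it costs is the nontrivial topological input (angelicity/Fr\'echet--Urysohn) and the need for a net version of the cited theorem.

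Two caveats on the step you flag yourself. First, the Eberlein--\v{S}mulian fallback does not work: hypothesis $(2)$, with the paper's definition of uaw-compactness, only supplies uaw-convergent \emph{subnets}, and Eberlein--\v{S}mulian extracts weakly convergent subsequences (which you already have, since $(z_n)$ is weakly null), not uaw-convergent ones. Second, the net version of \cite[Theorem 3.1]{k} is true but should be justified rather than asserted: by the standard approximation property of relatively weakly compact sets in \cite{AB}, for each $0\le f\in E'$ and $\varepsilon>0$ there exists $u\in E^{+}$ with $f\bigl((|x|-u)^{+}\bigr)\le\varepsilon$ for all $x\in sol(\mathcal{S})$, whence for a uaw-null net $(x_{\alpha})\subset sol(\mathcal{S})$ one gets $f(x_{\alpha}^{\pm})\le f(|x_{\alpha}|\wedge u)+\varepsilon$ and therefore $x_{\alpha}^{\pm}\overset{w}\longrightarrow 0$. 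With that justification inserted, your argument is complete.
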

\begin{proof}
	$1)\Rightarrow 2)$ It follows from \cite[Corollary 3.5] {k}.

	$2)\Rightarrow 1)$ Let $T:E\longrightarrow \ell^{\infty}$ be an almost Dunford-Pettis operator and let $S$ be a relatively  weakly compact subset of $E$,
by our hypothesis we have $S$ is a relatively uaw-compact set of $E$. Now  by  Proposition \ref{p4} we see that $T$ is $S$-Lwc, therefore by  Proposition \ref{p} the set $T(S)$ is relatively compact. That is, $T$ is a Dunford-Pettis operator, and so by \cite[Corollary 2.4] {BEl} the lattice operations of $E$ are weakly sequentially continuous,
and the proof of the theorem is finished.
\end{proof}


\end{document}